\tikzset{
  commutative diagrams/.cd, 
  arrow style=tikz, 
  diagrams={>=stealth}
}
\theoremstyle{theorem}
\newenvironment{customthm}[1]
  {\innercustomthm}
  {\endinnercustomthm}
  \theoremstyle{definition}
\newenvironment{customdef}[1]
  {\innercustomdef}
  {\endinnercustomdef}
  \theoremstyle{definition}
\newenvironment{customquestion}[1]
  {\innercustomquestion}
  {\endinnercustomquestion}
  \theoremstyle{definition}
\newenvironment{customconj}[1]
  {\innercustomconj}
  {\endinnercustomconj}
\def\@tocline#1#2#3#4#5#6#7{\relax
  \ifnum #1>\c@tocdepth 
  \else
    \par \addpenalty\@secpenalty\addvspace{#2}%
    \begingroup \hyphenpenalty\@M
    \@ifempty{#4}{%
      \@tempdima\csname r@tocindent\number#1\endcsname\relax
    }{%
      \@tempdima#4\relax
    }%
    \parindent\z@ \leftskip#3\relax \advance\leftskip\@tempdima\relax
    \rightskip\@pnumwidth plus4em \parfillskip-\@pnumwidth
    #5\leavevmode\hskip-\@tempdima
      \ifcase #1
       \or\or \hskip 1em \or \hskip 2em \else \hskip 3em \fi%
      #6\nobreak\relax
    \dotfill\hbox to\@pnumwidth{\@tocpagenum{#7}}\par
    \nobreak
    \endgroup
  \fi}
\newcounter{marginnote}
\DeclareMathAlphabet{\mathpzc}{OT1}{pzc}{m}{it}
\theoremstyle{theorem}
\newtheorem{theorem}{Theorem}[section]
\newtheorem{lemma}[theorem]{Lemma}
\newtheorem{proposition}[theorem]{Proposition}
\theoremstyle{definition}
\newtheorem*{conjecture*}{Conjecture}
\newtheorem{remark}[theorem]{Remark}
\newtheorem*{runningexample*}{Running example}
\newtheorem*{aside*}{Aside}
\newtheorem{definition}[theorem]{Definition}
\newtheorem{definition-construction}[theorem]{Definition-Construction}
\newtheorem{example}[theorem]{Example}
\newtheorem{proposition-definition}[theorem]{Proposition-Definition}
\newtheorem{question}[theorem]{Question}
\DeclareMathOperator{\Id}{Id}
\newcommand{\Ycal}{\mathcal{Y}}
\newcommand{\Gm}{\mathbb{G}_{\mathrm{m}}}
\newcommand{\bcd}{\begin{center}\begin{tikzcd}}
\newcommand{\ecd}{\end{tikzcd}\end{center}}
\newcommand{\PP}{\mathbb{P}}
\newcommand{\Z}{\mathbb{Z}}
\newcommand{\R}{\mathbb{R}}
\newcommand{\Ccal}{\mathcal{C}}
\newcommand{\Spec}{\operatorname{Spec}}
\newcommand{\acts}{\curvearrowright}
\NewDocumentCommand{\compatibilitydatum}{m m m m m m O{} O{} O{}}{
\begin{equation*} \begin{tikzcd}[ampersand replacement=\&]
  \: \arrow{r} \& {#1} \arrow{r} \arrow{d}{#7} \& {#2} \arrow{r} \arrow{d}{#8} \& {#3} \arrow{r}{[1]} \arrow{d}{#9} \& \: \\
  \: \arrow{r} \& {#4} \arrow{r} \& {#5} \arrow{r} \& {#6} \arrow{r} \& \:
\end{tikzcd} \end{equation*}}
\NewDocumentCommand{\commutingsquare}{m m m m o O{} O{} O{} O{}}{
\begin{equation}\begin{tikzcd}[ampersand replacement=\&] \label{#5}
  #1 \arrow{r}{#6} \arrow{d}{#7} \& #2 \arrow{d}{#8} \\
  #3 \arrow{r}{#9} \& #4
\end{tikzcd}\IfValueTF{#5}{\label{#5}}{} \end{equation}}
\NewDocumentCommand{\cartesiansquare}{m m m m O{} O{} O{} O{}}{
\begin{equation*}\begin{tikzcd}[ampersand replacement=\&]
  #1 \arrow{r}{#5} \arrow{d}{#6} \arrow[dr, phantom, "\square"] \& #2 \arrow{d}{#7} \\
  #3 \arrow{r}{#8} \& #4
\end{tikzcd} \end{equation*}}
\NewDocumentCommand{\cartesiansquarelabel}{m m m m m O{} O{} O{} O{}}{
\begin{tikzcd}[ampersand replacement=\&]
  #1 \arrow{r}{#6} \arrow{d}{#7} \arrow[dr, phantom, "\square"] \& #2 \arrow{d}{#8} \\
  #3 \arrow{r}{#9} \& #4
\end{tikzcd}\IfValueTF{#5}{\label{#5}}{}
}
\NewDocumentCommand{\triangleofspaces}{m m m O{} O{} O{}}{
\begin{tikzcd} [ampersand replacement=\&]
#1 \arrow{r}{#4} \arrow[bend right]{rr}{#5} \& #2 \arrow{r}{#6} \& #3
\end{tikzcd}}
\begin{document}
 
\title[Toric configuration spaces]{Toric configuration spaces:\\ the bipermutahedron and beyond}
\author{Navid Nabijou}

\begin{abstract}
We establish faithful tropicalisation for point configurations on algebraic tori. Building on ideas from enumerative geometry, we introduce tropical scaffolds and use them to construct a system of modular fan structures on the tropical configuration spaces. The corresponding toric varieties provide modular compactifications of the algebraic configuration spaces, with boundary parametrising transverse configurations on tropical expansions. The rubber torus, used to identify equivalent configurations, plays a key role. As an application, we obtain a modular interpretation for the bipermutahedral variety.
\end{abstract}

\maketitle
\vspace{-25pt}
\tableofcontents

\vspace{-30pt}
\section*{Introduction} 
\noindent Fix a lattice $N\cong\Z^d$ and consider the associated algebraic torus and real vector space
\begin{align*} T \colonequals N \otimes \Gm, \qquad V \colonequals N \otimes \R.
\end{align*}
This paper explores the relationship between two moduli problems:
\begin{align*}
\text{Algebraic: \quad Labelled points $x_0,\ldots,x_n \in T$ up to simultaneous translation.}\\
\text{Tropical: \quad Labelled points $p_0,\ldots,p_n \in V$ up to simultaneous translation.}
\end{align*}
Points may coincide. Trivially, the corresponding moduli spaces are products modulo diagonals:
\begin{align*}
T[n]  \colonequals T^{n+1}/T, \qquad V[n] \colonequals V^{n+1}/V.
\end{align*}
Consider $N[n] \colonequals N^{n+1}/N$. We have $T[n] = N[n]\otimes \Gm$ and $V[n] = N[n] \otimes \R$. Consequently, there is a bijection between toric compactifications of $T[n]$ and complete fan structures on $V[n]$. In this paper we refine this into a correspondence:
\begin{equation} \label{eqn: correspondence} \left\{ \text{\parbox{0.185\textwidth}{\center{modular \\ compactifications \\ of $T[n]$}}} \right\} \longleftrightarrow \left\{ \text{\parbox{0.185\textwidth}{\center{modular \\ fan structures \\ on $V[n]$}}} \right\}.
\end{equation}
A key step is identifying the appropriate class of modular fan structures. The case $d=1$ recovers the identification between the Losev--Manin moduli space and the permutahedral variety. The cases $d \geq 2$ are new. The case $d=2$ in particular furnishes modular interpretations for the toric varieties associated to the bipermuthedron and the square of the permutahedron.

The above correspondence provides another example of faithful tropicalisation, in which tropicalising a moduli space of algebraic objects produces a moduli space of associated tropical objects \cite{CaporasoGonality,ACP,UlirschWeightedStable,AscherMolcho,CMRAdmissible,CHMRWeighted,GrossCorrespondence,RanganathanSkeletons1,CCUW, LenUlirsch,OdakaOshima,MW,TropUnivJac,KHQuot,BBCMMW,MR20,BCK,DivisorsCurves}.

For simplicity we use the translation action to fix $x_0$ as the identity element $1 \in T$. We refer to this as the \textbf{anchor point}. This produces an identification
\[ T[n] \cong T^n\]
consisting of the moduli for the remaining points $x_1,\ldots,x_n$. We retain the anchor point $x_0=1 \in T$  to align with the existing literature.

\subsection{Expansions and scaffolds} Fix a valuation ring $R$ with fraction field $K$ and value group $\R$. Consider a family of point configurations in $T$
\[ \Spec K \to T[n].\]
 Coordinatewise valuation produces a point configuration in $V$
\[ (x_1,\ldots,x_n) \in T[n](K) \quad \rightsquigarrow \quad (p_1,\ldots,p_n) \in V[n].\]
This intertwines algebraic and tropical moduli. The valuation $p_i \in V$ records the asymptotics of the point $x_i \in T(K)$. Since $x_0=1 \in T$ we have $p_0=0 \in V$.

Insights from logarithmic enumerative geometry (see e.g. \cite[Proposition~6.3]{NishinouSiebert} or \cite[Lemma~7.2.4]{MR20}) suggest that as the points $x_1,\ldots,x_n$ approach infinity, the ambient variety $T$ should degenerate to a \textbf{tropical expansion}: a union of toric varieties meeting along toric strata. The points $x_1,\ldots,x_n$ will then limit to interior points on the irreducible components of this expansion. A configuration on an expansion consisting entirely of interior points is referred to as \textbf{transverse}.

A tropical expansion is encoded by a polyhedral decomposition of $V$, whose vertices index the irreducible components. To ensure that the point $x_i \in T(K)$ limits to the interior of an irreducible component, its valuation $p_i \in V$ must lie on a vertex of this polyhedral decomposition. The situation is summarised in Figure~\ref{fig: limit}.

Producing a modular compactification of $T[n]$ thus requires the following input data  (Section~\ref{sec: scaffold}).

\begin{customdef}{X}[$\approx$ {Definition~\ref{def: scaffold}}] A \textbf{tropical scaffold} is the data of, for every point $(p_1,\ldots,p_n) \in V[n]$, a polyhedral decomposition of $V$ containing the points $p_0,p_1,\ldots,p_n \in V$ as vertices.	
\end{customdef}
This data is required to vary linearly with the point $(p_1,\ldots,p_n)$ and is thus encoded as a complete fan $\Lambda$ on $V[n] \times V$ whose fibre over the point $(p_1,\ldots,p_n) \in V[n]$ gives the corresponding polyhedral decomposition of $V$.

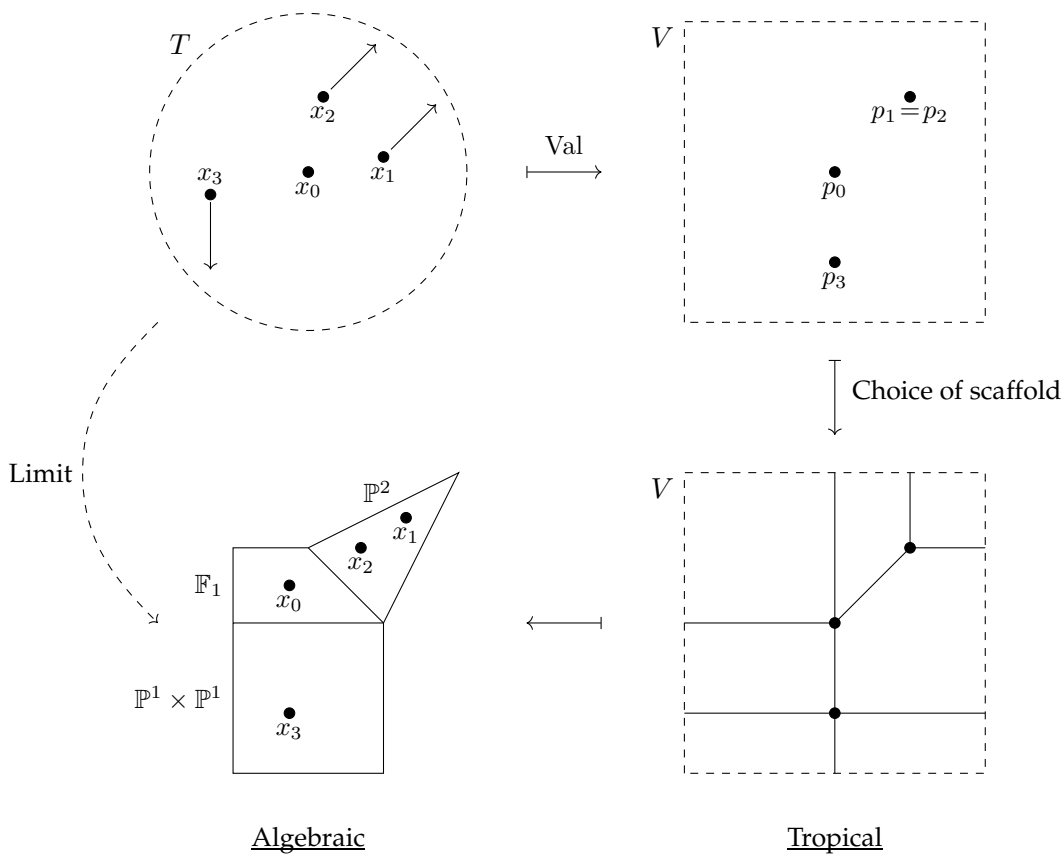
\begin{figure}
\begin{tikzpicture}


\draw(0,0) [dashed] circle[radius=60pt];
\draw(-1.7,1.7) node{$T$};

\draw (0,0) [fill=black] circle[radius=2pt];
\draw(0,0) node[below]{\small$x_0$};

\draw (1,0.2) [fill=black] circle[radius=2pt];
\draw (1,0.2) node[below]{\small$x_1$};
\draw [->] (1.1,0.3) -- (1.7,0.9);

\draw (0.2,1) [fill=black] circle[radius=2pt];
\draw (0.2,1) node[below]{\small$x_2$};
\draw [->] (0.3,1.1) -- (0.9, 1.7);

\draw (-1.3,-0.3) [fill=black] circle[radius=2pt];
\draw (-1.3,-0.3) node[above]{\small$x_3$};
\draw [->] (-1.3,-0.4) -- (-1.3,-1.3);

\draw [->,dashed] (-2,-2) to[in=90,out=225] (-3,-4) to[out=270,in=135] (-2,-6);
\draw (-3,-4) node[left]{\small{Limit}};

\draw [|->] (2.9,0) -- (3.9,0);
\draw (3.4,0.1) node[above]{\small{$\operatorname{Val}$}};


\draw [dashed] (5,-2) -- (5,2) -- (9,2) -- (9,-2) -- (5,-2);
\draw (5,1.8) node[left]{$V$};

\draw[fill=black] (7,0) circle[radius=2pt];
\draw (7,0) node[below]{\small$p_0$};

\draw[fill=black] (8,1) circle[radius=2pt];
\draw (8,1) node[below]{\small$p_1\!=\!p_2$};

\draw[fill=black] (7,-1.2) circle[radius=2pt];
\draw (7,-1.2) node[below]{\small$p_3$};

\draw [|->] (7,-2.5) -- (7,-3.5);
\draw (7.1,-2.9) node[right,align=center]{\small{Choice of scaffold}};


\draw [dashed] (5,-8) -- (5,-4) -- (9,-4) -- (9,-8) -- (5,-8);
\draw (5,-4.2) node[left]{$V$};

\draw[fill=black] (7,-6) circle[radius=2pt];

\draw[fill=black] (8,-5) circle[radius=2pt];

\draw[fill=black] (7,-7.2) circle[radius=2pt];

\draw (8,-5) -- (8,-4);
\draw (8,-5) -- (9,-5);
\draw (7,-6) -- (7,-4);
\draw (7,-6) -- (8,-5);
\draw (7,-6) -- (5,-6);
\draw (7,-6) -- (7,-7.2);
\draw (7,-7.2) -- (5,-7.2);
\draw (7,-7.2) -- (9,-7.2);
\draw (7,-7.2) -- (7,-8);

\draw [|->] (3.9,-6) -- (2.9,-6);

\draw (-1,-6) -- (1,-6) -- (1,-8) -- (-1,-8) -- (-1,-6);
\draw (-1,-7) node[left]{\small$\PP^1 \times \PP^1$};

\draw (-1,-6) -- (-1,-5) -- (0,-5) -- (1,-6);
\draw (-1,-5.5) node[left]{\small$\mathbb{F}_1$};

\draw (1,-6) -- (2,-4) -- (0,-5);
\draw (0.6,-4.25) node[right]{\small$\PP^2$};

\draw[fill=black] (-0.25,-5.5) circle[radius=2pt];
\draw (-0.25,-5.5) node[below]{\small$x_0$};

\draw[fill=black] (1.3,-4.6) circle[radius=2pt];
\draw (1.3,-4.6) node[below]{\small$x_1$};

\draw [fill=black] (0.7,-5) circle[radius=2pt];
\draw (0.7,-5) node[below]{\small$x_2$};

\draw[fill=black] (-0.25,-7.2) circle[radius=2pt];
\draw (-0.25,-7.2) node[below]{\small$x_3$};

\draw (0,-9) node{\small{\underline{\smash{Algebraic}}}};

\draw (7,-9) node{\small{\underline{\smash{Tropical}}}};

\end{tikzpicture}
\caption{A configuration on $T$ limiting to a transverse configuration on an expansion.}
\label{fig: limit}
\end{figure}

\subsection{Moduli} In Section~\ref{sec: moduli} we fix a tropical scaffold and construct the associated configuration space. Our results are summarised as follows.

\begin{customthm}{Y} To each tropical scaffold $\Lambda$ there is an associated configuration space $P_{\Lambda}$. This is a toric Deligne--Mumford stack compactifying $T[n]$, and its stacky fan is constructed explicitly from $\Lambda$ via universal weak semistable reduction (Definition~\ref{def: main construction}). It supports a universal tropical expansion and transverse point configuration (Section~\ref{sec: universal family}):
\[
\begin{tikzcd}
\Ycal_\Lambda \ar[d,"\uppi" left] \\
P_\Lambda. \ar[u,bend right, "{x_0,x_1,\ldots,x_n}" right]
\end{tikzcd}
\]
Each locally-closed stratum of $P_\Lambda$ parametrises transverse point configurations on the associated tropical expansion, with two configurations identified if they differ by the action of the rubber torus (Theorem~\ref{thm: strata}).
\end{customthm}
The rubber torus originates in logarithmic enumerative geometry \cite[Theorem~1.8]{CarocciNabijouRubber}. It is a crucial ingredient, necessary in order to obtain a separated moduli space. This perhaps explains why the above construction was not discovered earlier.

We build the fan of the configuration space $P_\Lambda$ explicitly from the scaffold $\Lambda$ (Section~\ref{sec: main construction}). Conceptually, it is obtained by stratifying $V[n]$ into regions on which the underlying polyhedral complex of the polyhedral decomposition of $V$ is constant. Practically, the construction $\Lambda \rightsquigarrow P_\Lambda$ can be viewed both as an instance of universal weak semistable reduction and as an example of a Chow quotient (Section~\ref{sec: semistable reduction}). This identifies the correct class of modular fan structures for \eqref{eqn: correspondence}.

While in general $P_\Lambda$ is a toric Deligne--Mumford stack, in most cases of interest it is a vanilla toric variety (but see Example~\ref{ex: stacky example} and Conjecture~\ref{conj: minimal gives a variety}).


\subsection{Bipermutahedral variety} \label{sec: examples introduction} This project began as an attempt to find a modular interpretation for the bipermutahedral variety. The bipermutahedral fan parametrises labelled points in $\R^2$ up to translation, stratified according to bisequence \cite{ArdilaDenhamHuh,ArdilaBipermutahedron}. The notion of bisequence suggests a particular choice of tropical scaffold. We identify the associated configuration space with the bipermutahedral variety (Figure~\ref{fig: biperm scaffold introduction} and Section~\ref{sec: bipermutahedron}). A coarsening of this scaffold also produces the square of the permutahedral variety (Figure~\ref{fig: 2 perms scaffold introduction} and Section~\ref{sec: square of permutahedron}).

This produces modular interpretations for these spaces, resembling the identification of the permutahedral variety with the Losev--Manin moduli space.
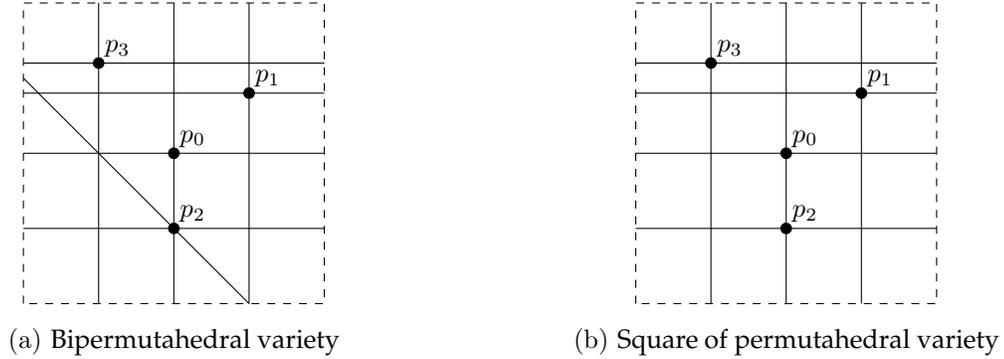
\begin{figure}
\centering
\begin{subfigure}[b]{0.4\textwidth}
\[
\begin{tikzpicture}


\draw [dashed] (5,-2) -- (5,2) -- (9,2) -- (9,-2) -- (5,-2);

\draw (5,0) -- (9,0);
\draw (5,0.8) -- (9,0.8);
\draw (8,2) -- (8,-2);
\draw (5,1.2) -- (9,1.2);
\draw (6,2) -- (6,-2);
\draw (7,2) -- (7,-2);
\draw (5,-1) -- (9,-1);

\draw (5,1) -- (8,-2);

\draw[fill=black] (7,0) circle[radius=2pt];
\draw (7.25,-0.05) node[above]{\small$p_0$};

\draw[fill=black] (8,0.8) circle[radius=2pt];
\draw (8.25,0.75) node[above]{\small$p_1$};

\draw[fill=black] (7,-1) circle[radius=2pt];
\draw (7.25,-1.05) node[above]{\small$p_2$};

\draw[fill=black] (6,1.2) circle[radius=2pt];
\draw (6.25,1.15) node[above]{\small$p_3$};

\end{tikzpicture}
\]
\caption{Bipermutahedral variety}
\label{fig: biperm scaffold introduction}
\end{subfigure}\qquad\qquad
\begin{subfigure}[b]{0.4\textwidth}
\[
\begin{tikzpicture}


\draw [dashed] (5,-2) -- (5,2) -- (9,2) -- (9,-2) -- (5,-2);

\draw (5,0) -- (9,0);
\draw (5,0.8) -- (9,0.8);
\draw (8,2) -- (8,-2);
\draw (5,1.2) -- (9,1.2);
\draw (6,2) -- (6,-2);
\draw (7,2) -- (7,-2);
\draw (5,-1) -- (9,-1);

\draw[fill=black] (7,0) circle[radius=2pt];
\draw (7.25,-0.05) node[above]{\small$p_0$};

\draw[fill=black] (8,0.8) circle[radius=2pt];
\draw (8.25,0.75) node[above]{\small$p_1$};

\draw[fill=black] (7,-1) circle[radius=2pt];
\draw (7.25,-1.05) node[above]{\small$p_2$};

\draw[fill=black] (6,1.2) circle[radius=2pt];
\draw (6.25,1.15) node[above]{\small$p_3$};

\end{tikzpicture}
\]
\caption{Square of permutahedral variety}
\label{fig: 2 perms scaffold introduction}
\end{subfigure}
\caption{Scaffolds producing the bipermutahedral variety and the square of the permutahedral variety. The former is obtained from the latter by slicing with the supporting antidiagonal.}
\label{fig: dim 2 scaffolds introduction}
\end{figure}


\subsection{Prospects} The freedom to choose a tropical scaffold affords great flexibility. This is by design: the resulting class of configuration spaces is broad enough to include both the bipermutahedral variety and the square of the permutahedral variety.

Despite this flexibility, there is an open structural question. Tropical scaffolds form an inverse system: a refinement of a scaffold is again a scaffold. For $d=1$ and $n$ fixed, this inverse system has a unique minimal element, whose associated configuration space is the permutahedral variety (Section~\ref{sec: permutahedron}). This singles out permutahedral varieties amongst all toric Deligne--Mumford stacks. For $d \geq 2$ there are multiple minimal scaffolds.

\begin{customquestion}{A} \label{question: minimal} For $d \geq 2$, characterise the toric stacks arising as configuration spaces associated to minimal tropical scaffolds.	
\end{customquestion}
The scaffold giving rise to the square of the permutahedral variety is minimal, while the scaffold giving rise to the bipermutahedral variety is not. We suspect that the latter cannot be obtained from \emph{any} minimal scaffold, though at present we have no avenues for proving this. We also posit:
\begin{customconj}{B} \label{conj: minimal gives a variety} If a tropical scaffold is minimal, the associated toric stack is in fact a toric variety. \end{customconj}

Besides the bipermutahedral variety, there are several other toric varieties generalising the Losev--Manin moduli space. Examples include the harmonic variety (see \cite{ArdilaEscobar} and Section~\ref{sec: harmonic}) and the toric stacks associated to root systems of type B, C, D \cite{BatyrevBlume} (we thank the anonymous referee for drawing our attention to these latter spaces).

\begin{customquestion}{C} Which of the above examples arise as a configuration space $P_\Lambda$ associated to some tropical scaffold $\Lambda$? \end{customquestion}

\subsection{Proximate moduli}

\subsubsection{Toroidal embeddings} This paper may be adapted to study point configurations on toroidal embeddings $(X|D)$. Here $T$ is replaced by the interior $U \colonequals X \setminus D$ and $V$ is replaced by the tropicalisation $\Sigma \colonequals \Sigma(X|D)$. The universal tropical family is the projection
\[
\begin{tikzcd}
	\Sigma^n \times \Sigma \ar[d,"\uppi" left] \\
	\Sigma^n \ar[u,bend right, "{p_1,\ldots,p_n}" right]
\end{tikzcd}
\]
and there is no anchor point $p_0$ or translation action (compare with \eqref{eqn: universal family on vector spaces}). Tropical scaffolds $\Lambda$ are refinements of $\Sigma^n \times \Sigma$ and universal weak semistable reduction \cite[Section~3.2]{MolchoSS} produces $\Pi_\Lambda$.

This is not quite a generalisation of our setup, as it requires an initial choice of compactification $X$ of $U$. Consequently the configuration space is always a modification of $X^n$. We focus on tori in this paper in order to emphasise the pleasant combinatorics.

\subsubsection{Very affine varieties} This paper may also be adapted to study point configurations on a closed subvariety $U$ of an algebraic torus. Here $T$ is replaced by $U$ and $V$ is replaced by $\operatorname{trop}U \subseteq V$. Each tropical scaffold should produce a stacky fan structure on $(\operatorname{trop}U)^n = \operatorname{trop}(U^n)$. This will require a mild generalisation of the stacky Chow quotient of \cite[Section~3.2]{AscherMolcho}.

The set $\operatorname{trop} U$ admits a fan structure \cite[Theorem~2.2.5]{EKL}. If $U$ is sch\"{o}n then a choice of fan structure produces a toroidal embedding $(\overline{U}\ | \ \overline{U} \setminus U)$ which reduces this case to the previous one \cite[Theorem~1.4]{Tevelev}. However there is no minimal choice of fan structure on $\operatorname{trop} U$.

\subsubsection{Di Rocco--Schaffler} While preparing this paper we discovered \cite{DiRoccoSchaffler}. Building on \cite{GerritzenPiwek,KTChow,HKTHyperplane,AscherMolcho,SchafflerTevelev} the authors study toric configuration spaces of a similar flavour to ours: the configuration space is also constructed as a Chow quotient, and the boundary also parametrises configurations on tropical expansions.

The difference lies in the input data. Instead of a tropical scaffold, the authors begin with a complete fan $\Sigma$ on $N$. In our language, $\Sigma$ gives rise to a canonical tropical scaffold: for each $(p_1,\ldots,p_n) \in V[n]$ we overlay the translates of $\Sigma$ at the points $p_0,p_1,\ldots,p_n$ and take the minimal common refinement (see \cite[Definition~3.4]{DiRoccoSchaffler}). Under the identification $V[n] \times V \cong V^n \times V = V^{n+1}$ this gives $\Lambda = \Sigma^{n+1}$ and the associated configuration space is the Chow quotient by the diagonal. The configuration spaces studied in \cite{DiRoccoSchaffler} thus constitute an important special case of our construction, and the authors establish many interesting results in this setting.

Applying their construction with $\Sigma=\Sigma(\PP^1 \times \PP^1)$ gives the scaffold producing the square of the permutahedral variety (Section~\ref{sec: square of permutahedron}). On the other hand no choice of $\Sigma$ gives the scaffold producing the bipermutahedral variety (Section~\ref{sec: bipermutahedron}).

To our understanding, the rubber torus does not appear in \cite{DiRoccoSchaffler}.

\subsubsection{Distinct points} Consider configurations of distinct labelled points on a smooth projective variety $X$. The resulting moduli space is $X^n$ minus all diagonals. Fulton--MacPherson construct a beautiful modular compactification of this space \cite{FM}. As points collide, $X$ is replaced by the degeneration to the normal cone of the collision point. The boundary of the moduli space parametrises transverse point configurations on such degenerations, up to automorphisms covering the identity on $X$. This is reminiscent of tropical expansions and rubber tori, though Fulton--MacPherson's automorphism groups are typically not abelian.

Upcoming work of Mok studies configurations of distinct labelled points in the interior of a simple normal crossings pair. The resulting moduli spaces synthesise Fulton--MacPherson degenerations and tropical expansions.

\subsubsection{Unlabelled points} Another variant would be to consider configurations of unlabelled, possibly coincident points. The results of this paper would need to be modified to incorporate $S_n$-invariant structures. Special care must be taken with automorphisms, and cone stacks will likely play a role \cite[Section 2]{CCUW}. This would produce a logarithmic analogue of the punctual Chow variety.

The natural next step would be to study the Hilbert--Chow morphism to the logarithmic punctual Hilbert scheme. The latter was introduced in \cite{MR20} and studied further in \cite{KHQuot}.


\subsection*{Acknowledgements} I wish to thank Dhruv~Ranganathan, who casually suggested that it might be worth looking for a modular interpretation for the bipermutahedral variety. The tropical description of rubber tori used to characterise the boundary of the configuration spaces arose in joint work \cite{CarocciNabijouRubber} with Francesca~Carocci, whom I thank for countless discussions on this topic. I am also grateful to Luca~Battistella and Patrick Kennedy-Hunt for numerous inspiring conversations. The expert reader will clearly recognise the intellectual debt owed to the field of logarithmic and tropical enumerative geometry as a whole.

\subsection*{Competing Interests.} The author declares that there are no competing interests.

\subsection*{Funding} The author was partially supported by the Herchel Smith Fund.

\section{Scaffolds}\label{sec: scaffold}
\noindent Recall that $V[n]$ parametrises labelled points $p_0,p_1,\ldots,p_n \in V$ up to simultaneous translation. We view this as a vector space with integral structure $N[n] \subseteq V[n]$. The rigidification $x_0=1 \in T$ corresponds to the rigidification $p_0=0 \in V$ and furnishes an isomorphism $V[n] \cong V^n$. The moduli space $V[n]$ supports a universal family
\begin{equation} \label{eqn: universal family on vector spaces}
\begin{tikzcd}
V[n] \times V \ar[d,"\uppi" left] \\
V[n] \ar[u,bend right,"{p_0,p_1,\ldots,p_n}" right]
\end{tikzcd}
\end{equation}
where, letting $\uppi_i \colon V[n] \cong V^n \to V$ denote the $i$th projection, the universal sections $p_i$ are given by
\[ p_0 = \Id \times 0,\ p_1 = \Id \times \uppi_1,\  \ldots\ ,\ p_n = \Id \times \uppi_n. \]

\begin{definition} \label{def: scaffold} A \textbf{tropical scaffold} $\Lambda$ is a complete fan on $V[n] \times V$ such that the image $p_i(V[n])$ of every section $p_i$ is a union of cones in $\Lambda$.\end{definition}

For every $p \in V[n]$, intersection with the cones of $\Lambda$ gives a polyhedral decomposition of the fibre $\uppi^{-1}(p)=V$ which we denote $\Lambda_p$. The condition that the image of every section is a union of cones ensures that every point $p_i = p_i(p) \in \uppi^{-1}(p)$ belongs to a vertex of $\Lambda_p$.

\begin{remark} Tropical scaffolds are primitive cousins of the universal tropical expansions constructed for logarithmic Donaldson--Thomas theory \cite[Section~3]{MR20}. In that setting it is profitable to permit non-complete expansions. We insist on completeness in order to apply results on universal weak semistable reduction for toric stacks (Section~\ref{sec: semistable reduction}). A benefit is that the construction of the moduli space from the scaffold is canonical.
\end{remark}

\section{Moduli} \label{sec: moduli}

\noindent Given a tropical scaffold $\Lambda$ we construct the associated \textbf{configuration space} $P_\Lambda$. This is a toric Deligne--Mumford stack compactifying $T[n]$. We construct the corresponding stacky fan $\Pi_\Lambda$ on $V[n]$ explicitly from the scaffold $\Lambda$. Conceptually this is achieved by stratifying $V[n]$ into regions over which the polyhedral complex $\Lambda_p$ remains constant.

Formally, the construction $\Lambda \rightsquigarrow \Pi_\Lambda$ can be viewed either as an instance of universal weak semistable reduction \cite{AbramovichKaru,MolchoSS} or as a stacky Chow quotient \cite{KapranovSturmfelsZelevinsky,AscherMolcho}. We now explain this. However it is defined, the fan $\Pi_\Lambda$ should be such that the projection $V[n] \times V \to V[n]$ is a map of fans $\Lambda \to \Pi_\Lambda$. The corresponding toric morphism is the universal tropical expansion, so in particular should be equidimensional with reduced fibres. We construct $\Pi_\Lambda$ as the coarsest stacky fan on $V[n]$ satisfying these conditions, with the caveat that we need to allow $\Lambda$ to be replaced by a refinement (Section~\ref{sec: main construction})

After constructing $P_\Lambda$ we establish its key properties: we construct the universal family and point configuration (Section~\ref{sec: universal family}) and verify that boundary strata in $P_\Lambda$ parametrise point configurations, up to the rubber action, on the tropical expansions encoded by the scaffold (Section~\ref{sec: strata}).

While in general $P_\Lambda$ is a Deligne--Mumford stack, in most cases of interest it is a variety. The relaxed reader may therefore ignore the stacky subtleties in what follows.

\subsection{Preliminaries} We recall the basics of stacky fans and semistable reduction.

\subsubsection{Stacky fans} \label{sec: stacky fans} Toric Deligne--Mumford stacks and their associated fans appear in the literature in various incarnations \cite{BorisovChenSmith, FantechiMannNironi, GeraschenkoSatriano, GillamMolchoStackyFans}. For us, a \textbf{stacky fan} consists of an ordinary fan $\Sigma$ on a lattice $N$ together with a collection of finite-index sublattices
\[ L_\upsigma \subseteq N_\upsigma \colonequals (\upsigma \otimes \R) \cap N \]
for every $\upsigma \in \Sigma$, compatible under face inclusions. Locally, the corresponding toric stack has isotropy group $N_\upsigma/L_\upsigma$ along its deepest stratum. These were introduced in \cite{Tyomkin} and studied in \cite{GillamMolchoStackyFans} where they take the name ``lattice KM fans'' after \cite{KottkeMelrose}. A quick introduction can be found in \cite[Section~2.3]{MolchoSS}.

Stacky fans in this sense correspond to toric Deligne--Mumford stacks, possibly singular but with generically trivial isotropy. These are precisely the toric stacks which arise in our study.

\subsubsection{Weak semistability} A map of stacky fans
\[ \uppi \colon (N_1,\Sigma_1,\{L_{\upsigma_1}\}) \to (N_2,\Sigma_2,\{L_{\upsigma_2}\})\]
is \textbf{weakly semistable} if for all $\sigma_1 \in \Sigma_1$ we have
\[ \uppi(\upsigma_1) \in \Sigma_2 \quad \text{and} \quad \uppi(L_{\upsigma_1}) = L_{\uppi(\upsigma_1)}.\]
If the lattice map $\uppi \colon N_1 \to N_2$ is surjective, then $\uppi$ is weakly semistable if and only if the corresponding morphism of toric stacks is equidimensional with reduced fibres; see \cite[Sections~4 and 5]{AbramovichKaru} for the case of toric varieties, and \cite[Proposition~3.1.1]{GillamMolchoStackyFans} for the extension to toric stacks. Given a weakly semistable map of stacky fans, the corresponding morphism of toric stacks is always representable; this follows immediately from the fan criterion for representability \cite[Theorem~3.11.2]{GillamMolchoStackyFans}.

\subsubsection{Stacky Chow quotients and universal weak semistable reduction} \label{sec: semistable reduction} In \cite{AscherMolcho} Ascher--Molcho construct a stacky enhancement of the Chow quotient \cite{KapranovSturmfelsZelevinsky}. We recast this as an instance of universal weak semistable reduction \cite{MolchoSS} for maps to the logarithmic torus. We claim no originality here: we suspect that this interpretation was already known to the authors.

Fix lattices $N_1$ and $N_2$, a complete fan structure $\Sigma_1$ on $N_1$, and a surjective map
\[ \uppi \colon N_1 \to N_2.\]
Crucially, we do not begin with any fan structure on $N_2$. For the intended application, we will have $N_1=N[n] \times N$ and $N_2 = N[n]$ with $\uppi$ the projection and $\Sigma_1=\Lambda$ a tropical scaffold. It is crucial that $\Sigma_1$ is complete.

\begin{remark} We write the above data as $\uppi \colon (N_1,\Sigma_1) \to N_2$ and interpret the codomain as a ``fan'' consisting of a single cone constituting the entire lattice. If $X_1$ is the toric variety corresponding to $(N_1,\Sigma_1)$ then $\uppi$ corresponds to a logarithmic morphism $X_1 \to N_2 \otimes \mathbb{G}_{\mathrm{log}}$. See \cite{RW,MW} for background on logarithmic and tropical tori, and \cite[Section~1]{KHQuot} for a treatment of cone stacks with non-convex charts. \end{remark}

\begin{definition} The category $\mathcal{F}$ of \textbf{flattenings} has objects consisting of the following data:
\begin{enumerate}
\item $(N_2,\Sigma_2^\prime,\{L_{\upsigma_2}^\prime\})$ a stacky fan on $N_2$.\medskip
\item $(N_1,\Sigma_1^\prime,\{L_{\upsigma_1}^\prime\})$ a stacky fan refining $(N_1,\Sigma_1)$.\footnote{This means that $\Sigma_1^\prime$ is a subdivision of $\Sigma_1$. There is no condition on the sublattices $L_{\upsigma_1}^\prime$. Geometrically $\Sigma_1^\prime$ implements a toric blowup and the $L_{\upsigma_1}^\prime$ implement root constructions.}
\end{enumerate}
This data is subject to the following conditions:
\begin{enumerate}
	\item The map $N_1 \to N_2$ is a map of stacky fans $(N_1,\Sigma_1^\prime,\{L_{\upsigma_1}^\prime\}) \to (N_2,\Sigma_2^\prime,\{L_{\upsigma_2}^\prime\})$.\medskip
	\item This map is weakly semistable.
\end{enumerate}
Necessarily, the fan $\Sigma_2^\prime$ is complete. Morphisms in $\mathcal{F}$ consist of fan maps which are the identity on the underlying lattices:
\bcd
(N_1, \Sigma_1^{\prime\prime}, \{L_{\upsigma_1}^{\prime\prime}\})  \ar[r] \ar[d] & 
(N_1, \Sigma_1^{\prime}, \{L_{\upsigma_1}^{\prime}\}) \ar[r] \ar[d] & 
(N_1, \Sigma_1) \ar[d] \\
(N_2, \Sigma_2^{\prime\prime}, \{L_{\upsigma_2}^{\prime\prime}\}) \ar[r] & 
(N_2, \Sigma_2^{\prime}, \{L_{\upsigma_2}^{\prime}\}) \ar[r] & N_2.
\ecd
\end{definition}

\begin{theorem}[\!{\cite[Theorem~3.9]{AscherMolcho}}] \label{thm: semistable reduction} The category $\mathcal{F}$ has a terminal object.
\end{theorem}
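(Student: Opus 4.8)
The plan is to exhibit a single explicit object $F_0 \in \mathcal{F}$ — the ``Chow flattening'' of $\pi$ — and to check that every object of $\mathcal{F}$ admits a unique morphism to it. The uniqueness half is formal and costs nothing: a fan map covering $\mathrm{id}_{N_i}$ from a subdivision to the fan it subdivides must send each cone to the unique cone containing it, and compatibility with the sublattices $\{L_\upsigma\}$ is a property rather than extra data. So the substance lies in (a) building $F_0$ and (b) showing every flattening refines it.

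For (a) I would first pin down the base fan $\Sigma_2^\prime$ on $N_2$. Since $\Sigma_1$ is complete and $\pi$ surjective, the cones of $\Sigma_1$ cut each affine fibre $\pi^{-1}(v)$, $v \in V_2 \colonequals N_2 \otimes \R$, into a polyhedral subdivision; I would take $\Sigma_2^\prime$ to be the decomposition of $V_2$ into the loci over which the combinatorial type of this fibre subdivision is constant — equivalently, the coarsest fan on $N_2$ for which $\pi(\upsigma)$ is a union of cones of it for every $\upsigma \in \Sigma_1$. This is the recession analogue of the Billera--Sturmfels fibre polytope and coincides with the fan of the Chow quotient of $X_{\Sigma_1}$ by the subtorus $\ker(\pi) \otimes \Gm$ \cite{KapranovSturmfelsZelevinsky}; finiteness of $\Sigma_1$ makes it a genuine rational polyhedral fan, and completeness makes it complete. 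Next I would set $\Sigma_1^\prime \colonequals \{\upsigma \cap \pi^{-1}(\uptau) : \upsigma \in \Sigma_1,\ \uptau \in \Sigma_2^\prime\}$, which is a subdivision of $\Sigma_1$ carried cone-onto-cone by $\pi$; because the fibre combinatorics is constant over $\operatorname{relint}(\uptau)$ for each $\uptau$, no further blow-up of the base is needed to make the underlying toric morphism equidimensional, which is exactly the regime in which the weak semistable reduction of \cite{AbramovichKaru} and its stacky refinement \cite{GillamMolchoStackyFans,MolchoSS} apply with the base fan left unchanged. Reduced fibres I would then force by rooting: put $L_\uptau \colonequals \bigcap_{\pi(\upsigma_1^\prime) = \uptau} \pi(N_{1,\upsigma_1^\prime})$, the largest finite-index sublattice of $N_{2,\uptau}$ through which every relevant $\pi|_{N_{1,\upsigma_1^\prime}}$ factors, and $L_{\upsigma_1^\prime} \colonequals (\pi|_{N_{1,\upsigma_1^\prime}})^{-1}(L_{\pi(\upsigma_1^\prime)})$. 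After checking face-compatibility and $\pi(L_{\upsigma_1^\prime}) = L_{\pi(\upsigma_1^\prime)}$ this gives a weakly semistable map of stacky fans, which is $F_0$.

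For (b), given an arbitrary object with base fan $\Sigma_2^{\prime\prime}$ and top fan $\Sigma_1^{\prime\prime}$, weak semistability forces the toric morphism to be equidimensional with reduced fibres, so the combinatorial type of $\Sigma_1^{\prime\prime} \cap \pi^{-1}(v)$ — hence that of $\Sigma_1 \cap \pi^{-1}(v)$, since $\Sigma_1^{\prime\prime}$ refines $\Sigma_1$ — is constant over $\operatorname{relint}(\uptau^{\prime\prime})$ for each $\uptau^{\prime\prime} \in \Sigma_2^{\prime\prime}$; this says precisely that $\Sigma_2^{\prime\prime}$ refines the chamber fan $\Sigma_2^\prime$. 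Then $\Sigma_1^{\prime\prime}$, refining $\Sigma_1$ and mapping into $\Sigma_2^{\prime\prime}$ and hence into $\Sigma_2^\prime$, has every cone inside some $\upsigma \cap \pi^{-1}(\uptau)$, so it refines $\Sigma_1^\prime$; and the weak-semistability identities $L^{\prime\prime}_{\uptau^{\prime\prime}} = \pi(L^{\prime\prime}_{\upsigma_1^{\prime\prime}}) \subseteq \pi(N_{1,\upsigma_1^\prime})$ for the cone $\upsigma_1^\prime \in \Sigma_1^\prime$ containing $\upsigma_1^{\prime\prime}$, run over all such cones, force $L^{\prime\prime}_{\uptau^{\prime\prime}} \subseteq L_\uptau$ and likewise $L^{\prime\prime}_{\upsigma_1^{\prime\prime}} \subseteq L_{\upsigma_1^\prime}$, which is exactly what turns the two refinements into a morphism of stacky fans onto $F_0$. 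I expect the genuine obstacle to be making the middle paragraph airtight — that the chamber decomposition is a fan and, more seriously, that over it the pull-back is equidimensional with the prescribed sublattices cutting out reduced fibres. That is the technical core of weak semistable reduction, and the economical route is to deduce it from \cite{AbramovichKaru,GillamMolchoStackyFans,MolchoSS} applied over the now-fixed base $\Sigma_2^\prime$, reserving the real effort for the converse sketched above, which is what is particular to the ``no prescribed fan on $N_2$'' setting here.
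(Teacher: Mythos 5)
Your construction is exactly the one the paper has in mind: the theorem is quoted from Ascher--Molcho without proof, and the paragraph immediately following it describes the terminal object precisely as you build it (overlay the images $\uppi(\upsigma_1)$ to get $\Sigma_2'$, intersect the images of the lattices $N_{\upsigma_1}$ to get the $L_{\uptau}$, and pull back to get $\Sigma_1'$). Your terminality argument is the standard one from that reference, and the technical points you flag (the chamber decomposition being a fan, equidimensionality over it) are indeed where the real work sits, correctly delegated to \cite{KapranovSturmfelsZelevinsky,AbramovichKaru,GillamMolchoStackyFans,MolchoSS}.
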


The terminal object $(N_2,\Sigma_2^\prime,\{L_{\upsigma_2}^\prime\})$ is the stacky Chow quotient \cite[Section~3.2]{AscherMolcho} of $(N_1,\Sigma_1)$ by the saturated sublattice $\operatorname{Ker} \uppi \subseteq N_1$. The fan $\Sigma_2^\prime$ is obtained by overlaying the images $\uppi(\upsigma_1) \subseteq N_{2} \otimes \R$ for every cone $\upsigma_1 \in \Sigma_1$. The lattices $L_{\upsigma_2}^\prime$ are obtained by intersecting the images of the lattices $N_{\upsigma_1}$. Finally, pullback produces the stacky fan $(N_1,\Sigma_1^\prime,\{L_{\upsigma_1}^\prime\})$ refining $(N_1,\Sigma_1)$. 

\subsection{Main construction} \label{sec: main construction}

\begin{definition} \label{def: main construction} Given a tropical scaffold $\Lambda$ the associated \textbf{configuration fan} $\Pi_\Lambda$ is the stacky fan obtained by applying Theorem~\ref{thm: semistable reduction} to the projection 
\[ \uppi \colon (N[n] \times N,\Lambda) \to N[n].\]
The associated \textbf{configuration space} $P_\Lambda$ is the proper toric Deligne--Mumford stack corresponding to $\Pi_\Lambda$. This application of Theorem~\ref{thm: semistable reduction} also produces a stacky refinement of $\Lambda$ and from now on we replace $\Lambda$ by this refinement.
\end{definition}
While in general $P_\Lambda$ is a Deligne--Mumford stack, in cases of interest it is usually a variety (see Sections~\ref{sec: permutahedron} and \ref{sec: dim 2}). The necessity of stacky structures in general is explained in Example~\ref{ex: stacky example}.

\subsection{Universal family} \label{sec: universal family} Let $\Ycal_\Lambda$ denote the toric stack corresponding to $\Lambda$. The map $\Lambda \to \Pi_\Lambda$ is weakly semistable (Theorem~\ref{thm: semistable reduction}) and hence the corresponding morphism of proper toric stacks
\[ \Ycal_\Lambda \to P_\Lambda \]
is representable with equidimensional and reduced fibres. We refer to it as the \textbf{universal tropical expansion}. We now turn to the universal point configuration. Consider the section $p_i \colon V[n] \to V[n] \times V$ defined in Section~\ref{sec: scaffold}. 

\begin{lemma} The section $p_i$ is a weakly semistable map of stacky fans $\Pi_\Lambda \to \Lambda$. 
	\end{lemma}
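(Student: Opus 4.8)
The plan is to verify the two defining conditions of weak semistability for the map $p_i \colon N[n] \to N[n] \times N$, namely that (i) $p_i$ sends cones of $\Pi_\Lambda$ into cones of $\Lambda$, and (ii) $p_i$ carries the sublattice attached to each cone of $\Pi_\Lambda$ onto the sublattice attached to its image cone in $\Lambda$. Throughout we use the identification $V[n] = V^n$ furnished by the anchor point, under which $p_i = \Id \times \uppi_i$ (with $\uppi_0 = 0$), so $p_i$ is a section of the projection $\uppi \colon N[n] \times N \to N[n]$ that cut out $\Pi_\Lambda$ via Theorem~\ref{thm: semistable reduction}.

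First I would address condition (i). By construction (the description of the terminal object following Theorem~\ref{thm: semistable reduction}), the fan $\Sigma_1' = \Lambda$ — after the replacement in Definition~\ref{def: main construction} — is the pullback of $\Pi_\Lambda$ along $\uppi$; that is, $\Lambda$ refines the coarse preimage fan $\uppi^{-1}\Pi_\Lambda$, and in fact the refinement data is arranged precisely so that $\uppi \colon \Lambda \to \Pi_\Lambda$ is a map of fans. Concretely, for each cone $\upsigma \in \Pi_\Lambda$ the preimage $\uppi^{-1}(\upsigma)$ is subdivided by $\Lambda$, and each cone of $\Lambda$ maps into a single cone of $\Pi_\Lambda$. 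Now the image $p_i(V[n])$ is, by the scaffold axiom (Definition~\ref{def: scaffold}), a union of cones of $\Lambda$. Given $\upsigma \in \Pi_\Lambda$, the image $p_i(\upsigma)$ is a subset of $p_i(V[n])$, hence covered by cones of $\Lambda$; and it lies over $\upsigma$ since $\uppi \circ p_i = \Id$. Because $\upsigma$ is a cone and $p_i$ is linear, $p_i(\upsigma)$ is a single cone of $V[n] \times V$; I would argue it is a cone \emph{of} $\Lambda$ by noting that $p_i(V[n])$ being a union of cones of $\Lambda$ means the induced subdivision of the linear subspace $p_i(V[n]) \cong V[n]$ is itself a fan, and the condition that $p_i \colon \Pi_\Lambda \to \Lambda$ be a fan map amounts to the statement that this subdivision is coarsened by $\Pi_\Lambda$. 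The cleanest route is to invoke the terminal/coarsest property: since $\uppi \colon \Lambda \to \Pi_\Lambda$ is a weakly semistable fan map, $\Lambda$ restricted to the section locus $p_i(V[n])$ is a fan refining (the image of) $\Pi_\Lambda$, and one checks it is in fact equal to $\Pi_\Lambda$ because the section locus meets each fibre $\uppi^{-1}(p)$ in a single point, namely the vertex $p_i$ of $\Lambda_p$ — so no genuine subdivision occurs along the section.

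Next, condition (ii): for $\upsigma \in \Pi_\Lambda$ with image cone $p_i(\upsigma) = \uptau \in \Lambda$, I must show $p_i(L_\upsigma) = L_\uptau$, where $L_\upsigma \subseteq N[n]_\upsigma$ and $L_\uptau \subseteq (N[n] \times N)_\uptau$ are the stacky sublattices. Recall from the description of the stacky Chow quotient that $L_\upsigma$ is obtained by intersecting the images $\uppi(N[n] \times N)_{\upsigma_1}$ over the cones $\upsigma_1 \in \Lambda$ mapping onto $\upsigma$, while $L_\uptau$ for $\uptau \in \Lambda$ (after the stacky refinement) is the pullback sublattice, i.e. $L_\uptau = (\uppi|_\uptau)^{-1}(L_{\uppi(\uptau)}) \cap N[n] \times N$ restricted appropriately — again from the last paragraph of Section~\ref{sec: semistable reduction}. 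Since $\uptau = p_i(\upsigma)$ sits over $\upsigma$ and $p_i$ is a section, $\uppi$ restricts to a linear isomorphism $\uptau \xrightarrow{\sim} \upsigma$ with inverse $p_i$, and under this isomorphism the pullback lattice $L_\uptau$ corresponds exactly to $L_{\uppi(\uptau)} = L_\upsigma$. Thus $p_i(L_\upsigma) = L_\uptau$ follows formally. I would spell out that the cone $\uptau$ lies over the \emph{open} cone $\upsigma$ (not a proper face), which is what guarantees $\uppi|_\uptau$ is injective on the relevant lattice and hence that the pullback description of $L_\uptau$ applies verbatim.

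The main obstacle I anticipate is the bookkeeping in condition (i): making precise the claim that $\Lambda$ does not subdivide along the section locus, i.e. that $p_i(\upsigma) \in \Lambda$ rather than merely being a union of $\Lambda$-cones. This requires carefully unwinding which refinement of $\Lambda$ is in force after Definition~\ref{def: main construction} and confirming that the scaffold axiom ``$p_i(V[n])$ is a union of cones of $\Lambda$'' is preserved — and indeed strengthened to ``$p_i$ is a fan map'' — by the universal weak semistable reduction. One clean way around this is to observe that the section locus is a \emph{stratum-preserved} linear subspace: for every $p$, $\Lambda_p$ has $p_i(p)$ as a vertex, so the star of the section in $\Lambda$ already has the right combinatorial shape, and the weakly semistable fan map $\uppi \colon \Lambda \to \Pi_\Lambda$ must then identify this star with $\Pi_\Lambda$ itself. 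Once this is pinned down, condition (ii) is essentially automatic from the section property, as sketched above.
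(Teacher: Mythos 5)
Your proposal is correct and follows essentially the same route as the paper: view the section locus $p_i(V[n])$ as a subfan of $\Lambda$ (the scaffold axiom, preserved under the refinement of Definition~\ref{def: main construction}), observe that the weakly semistable projection $\uppi$ restricts to an isomorphism of fans from this subfan onto $\Pi_\Lambda$, and invert to get that $p_i$ maps each cone isomorphically onto a cone with the pullback sublattices matching. The ``obstacle'' you flag in condition (i) is resolved exactly as you suggest, by the weak semistability of $\uppi \colon \Lambda \to \Pi_\Lambda$ guaranteed by Theorem~\ref{thm: semistable reduction}, so there is no gap.
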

\begin{proof} By assumption the image $p_i(V[n])$ is a union of cones of $\Lambda$ (this does not change when we refine $\Lambda$ in Definition~\ref{def: main construction}). We therefore view $p_i(V[n])$ as a subfan of $\Lambda$. The map $\uppi$ restricts to an isomorphism of vector spaces $\uppi \colon p_i(V[n]) \to V[n]$. Since $\uppi$ is weakly semistable, this restriction is an isomorphism of fans. In particular its inverse $p_i$ is also a map of fans, and maps every cone surjectively (in fact, isomorphically) onto another cone. It is straightforward to check that $p_i$ respects the stacky sublattices.
\end{proof}

Consequently, we obtain toric morphisms $x_i \colon P_\Lambda \to \Ycal_\Lambda$ which are sections of the universal tropical expansion. Together these produce the \textbf{universal point configuration} (compare with \eqref{eqn: universal family on vector spaces})
\begin{equation} \label{eqn: universal family}
\begin{tikzcd}
\Ycal_\Lambda \ar[d,"\uppi" left] \\
P_\Lambda. \ar[u,bend right,"{x_0,x_1,\ldots,x_n}" right]
\end{tikzcd}
\end{equation}
Since $p_i$ is weakly semistable, $x_i$ is torically transverse. It is not flat because the underlying lattice map is not surjective.

\subsection{Expansion geometry} \label{sec: expansion geometry} As in Section~\ref{sec: scaffold} we view the map $\Lambda \to \Pi_\Lambda$ as a family of polyhedral decompositions of $V$ parametrised by $p \in |\Pi_\Lambda| = V[n]$. The finite edges of $\Lambda_p$ are metrised by the choice of $p$, and the isomorphism class of the polyhedral complex $\Lambda_p$ is constant on the relative interior of every cone $\uprho \in \Pi_\Lambda$. Specialising to a face of $\uprho$ has the effect of setting certain edge lengths to zero, collapsing $\Lambda_p$ to a simpler polyhedral complex.

Fix a cone $\uprho \in \Pi_\Lambda$ and let $P_{\Lambda, \uprho} \subseteq P_\Lambda$ denote the corresponding locally-closed torus orbit. Let
\[ Y_\uprho \subseteq \Ycal_\Lambda \]
denote the fibre of $\uppi$ over the distinguished point of $P_{\Lambda, \uprho}$. The fibre of $\uppi$ over any other point of $P_{\Lambda, \uprho}$ is non-canonically isomorphic to $Y_\uprho$. Consider the polyhedral complex
\[ \Lambda_\uprho \colonequals \Lambda_p \]
for $p$ any point in the relative interior of $\uprho$. Polyhedra of $\Lambda_\uprho$ correspond to cones $\uplambda \in \Lambda$ with $\uppi(\uplambda) = \uprho$ and so there is an inclusion-reversing correspondence between the polyhedra of $\Lambda_\uprho$ and the strata of $Y_\uprho$. In particular the vertices of $\Lambda_\uprho$ index the irreducible components of $Y_\uprho$. Each such irreducible component is a toric variety with dense torus $T$. Its fan is obtained by zooming in to the corresponding vertex of $\Lambda_\uprho$.

Each marking $p_i$ is supported on a vertex $v_i \in \Lambda_\uprho$. This corresponds to the unique cone of $\Lambda$ which is both contained in $p_i(V[n])$ and mapped isomorphically onto $\uprho$ via $\uppi$. The corresponding irreducible component $Y_{v_i} \subseteq Y_\uprho$ is a toric variety and we denote its dense torus by $T_{v_i}$ so that
\[ T_{v_i} \subseteq Y_{v_i} \subseteq Y_\uprho.\]
There is a natural identification $T_{v_i}=T$. When the universal section $x_i \colon P_\Lambda \to \Ycal_\Lambda$ is restricted to $P_{\Lambda, \uprho}$ it factors through $T_{v_i}$. For more on the geometry of tropical expansions (in the general context of toroidal embeddings) see \cite{CarocciNabijouExpansions}.

\subsection{Rubber action and strata} \label{sec: strata}
In \cite[Section~1]{CarocciNabijouRubber} a canonical torus action is defined for any tropical expansion over a cone $\uprho$. It is referred to as the rubber action; the terminology comes from enumerative geometry. We recall the rubber action in our context.

Fix a cone $\uprho \in \Pi_\Lambda$ and a vertex $v \in \Lambda_\uprho$ indexing an irreducible component $Y_v \subseteq Y_\uprho$. There is a linear tropical position map
\[ \varphi_v \colon \uprho \to V \]
which records the position of $v$ in terms of the tropical parameters in $\uprho$. We define the \textbf{rubber torus} 
\[ T_\uprho \colonequals L_\uprho \otimes \Gm \]
where $L_\uprho \subseteq N_\uprho$ is the finite-index sublattice encoded by the stacky fan (Section~\ref{sec: stacky fans}). The position map $\varphi_v$ corresponds to a lattice map $N_\uprho \to N$ which we restrict to $L_\uprho$ and tensor by $\Gm$ to produce a homomorphism $T_\uprho \to T$. Since $Y_v$ is a toric variety with dense torus $T_v=T$ we obtain an action
\[ T_\uprho \acts Y_v.\]
These actions glue to a global action $T_\uprho \acts Y_\uprho$ referred to as the \textbf{rubber action} \cite[Theorem~1.8]{CarocciNabijouRubber}. Examples are given in \cite[Section~4]{CarocciNabijouRubber}.

Recall from Section~\ref{sec: expansion geometry} that on the locally-closed stratum $P_{\Lambda, \uprho}$ the marking $x_i$ factors through the dense torus $T_{v_i} \subseteq Y_{v_i}$. The rubber action arises from a homomorphism $T_\uprho \to T_{v_i}=T$ and hence preserves $T_{v_i}$.

\begin{theorem} \label{thm: strata} The locally-closed stratum $P_{\Lambda, \uprho}$ is isomorphic to the moduli space of point configurations
\begin{equation} \label{eqn: point configuration stratum} (x_1,\ldots,x_n) \in T_{v_1} \times \cdots \times T_{v_n} \end{equation}
considered up to the diagonal action of the rubber torus $T_{\uprho}$.
\end{theorem}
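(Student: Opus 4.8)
The plan is to construct the isomorphism directly on the level of toric data, comparing the toric orbit $P_{\Lambda,\uprho}$ with a quotient of a product of tori. First I would recall that $P_{\Lambda,\uprho}$ is the torus orbit of $P_\Lambda$ associated to the cone $\uprho \in \Pi_\Lambda$, so as an abstract variety it is $T_{\uprho}^{\mathrm{dense}} \colonequals (N[n]/N_\uprho)\otimes\Gm$ up to the stacky structure; I would keep track of the lattice $L_\uprho \subseteq N_\uprho$ throughout. On the other side, $T_{v_1}\times\cdots\times T_{v_n}$ is canonically $T^n$ via the identifications $T_{v_i}=T$ of Section~\ref{sec: expansion geometry}, and I would need to identify the diagonal image of the rubber torus $T_\uprho = L_\uprho\otimes\Gm$ inside this product. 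The homomorphism $T_\uprho \to T_{v_i}=T$ is induced by the tropical position map $\varphi_{v_i}\colon \uprho\to V$, so the diagonal map $T_\uprho \to T^n$ is induced by $(\varphi_{v_1},\ldots,\varphi_{v_n})\colon N_\uprho\to N^n$ restricted to $L_\uprho$. The claim then reduces to a lattice-theoretic statement: the quotient of $T^n$ by this image is isomorphic, as a stack, to $P_{\Lambda,\uprho}$.

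The key computation is to show that the quotient lattice $N^n/\varphi(N_\uprho)$ — where $\varphi=(\varphi_{v_1},\ldots,\varphi_{v_n})$ — is canonically the character lattice dual of $P_{\Lambda,\uprho}$, and that the sublattice $\varphi(L_\uprho)$ matches the stacky structure. Here I would use the explicit description of $\Pi_\Lambda$ from the stacky Chow quotient (Theorem~\ref{thm: semistable reduction} and the paragraph following it): the cone $\uprho$ arises from overlaying the images $\uppi(\uplambda)$, and the lattice $L_\uprho$ is obtained by intersecting the images of the lattices $N_{\uplambda}$ over cones $\uplambda\in\Lambda$ with $\uppi(\uplambda)=\uprho$. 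The point is that the markings $p_i$ are supported on vertices $v_i$, each cut out by a unique cone of $\Lambda$ mapped isomorphically onto $\uprho$; tracking these through the overlay identifies $\varphi$ with the natural inclusion-type map, and the equality $\uppi(L_{\uplambda}) = L_\uprho$ (weak semistability) forces $\varphi$ to be surjective onto $N_\uprho$ with the correct finite-index behaviour. I expect the main obstacle to be exactly this: showing that the map $\varphi\colon N_\uprho \to N^n$ is injective and that $N^n/\varphi(N_\uprho)$ together with the induced sublattices is precisely the stacky fan data of $P_{\Lambda,\uprho}$ — injectivity amounts to the geometric fact that a point $p\in\mathrm{relint}(\uprho)$ is determined by the positions of the component-vertices $v_1,\ldots,v_n$, which should follow because $p_0,\ldots,p_n$ already span enough of $V[n]$, but the precise bookkeeping with the stacky sublattices is delicate.

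Once the lattice identification is in place, I would assemble the isomorphism: the rubber action $T_\uprho \acts Y_\uprho$ restricts to the diagonal action on $T_{v_1}\times\cdots\times T_{v_n}\subseteq Y_\uprho$ (this was observed just before the theorem statement, since each $T_\uprho\to T_{v_i}$ preserves $T_{v_i}$), so the geometric quotient $(T_{v_1}\times\cdots\times T_{v_n})/T_\uprho$ makes sense as a toric stack, and its stacky fan is the one computed above, hence equals that of $P_{\Lambda,\uprho}$. Finally I would check that this abstract isomorphism is compatible with the universal point configuration: the restriction of $x_i\colon P_\Lambda\to\Ycal_\Lambda$ to $P_{\Lambda,\uprho}$ factors through $T_{v_i}$ (Section~\ref{sec: expansion geometry}), and under the quotient identification this factorisation is exactly the $i$th coordinate of the tautological configuration on the moduli space $(x_1,\ldots,x_n)\in\prod T_{v_i}$ modulo $T_\uprho$. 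This last step is a routine diagram chase given the explicit sections constructed in Section~\ref{sec: universal family}, so the real content is the lattice computation in the middle paragraph.
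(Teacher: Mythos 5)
Your proposal is correct and follows essentially the same route as the paper: identify the diagonal rubber action on $T_{v_1}\times\cdots\times T_{v_n}=T^n$ with the map induced by the product of tropical position maps, and match the resulting quotient stack with the standard description of the torus orbit of $\uprho$ in the stacky fan. The one thing worth noting is that the step you flag as the ``delicate'' main obstacle is in fact immediate: since $\varphi_{v_i}$ is the composite $L_\uprho\hookrightarrow N_\uprho\hookrightarrow N[n]\xrightarrow{p_i}N[n]\times N\to N$ and $p_i=\Id\times\uppi_i$, each $\varphi_{v_i}$ is just $\uppi_i$ restricted to $L_\uprho$, so $\varphi_{v_1}\times\cdots\times\varphi_{v_n}$ is literally the lattice inclusion $L_\uprho\subseteq N[n]=N^n$ and the quotient is $[T[n]/T_\uprho]$ with no further bookkeeping required.
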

\begin{proof} For each $v_i$ the tropical position map $\varphi_{v_i} \colon L_\uprho \to N$ records the position of $p_i$. It is obtained as the composite
\[ L_\uprho \hookrightarrow N_\uprho \hookrightarrow  N[n] \xrightarrow{p_i} N[n] \times N \to N\]
where $p_i \colon N[n] \to N[n] \times N$ is the section considered in Section~\ref{sec: scaffold}. Since $p_i = \Id \times \uppi_i$ we can identify the above composite with
\[ L_\uprho \hookrightarrow N_\uprho \hookrightarrow N[n] = N^n \xrightarrow{\uppi_i} N.\]
It follows that the product of tropical position maps $\varphi_{v_1} \times \cdots \times \varphi_{v_n}$ recovers the lattice inclusion $L_{\uprho} \subseteq N[n]$. The moduli space of point configurations \eqref{eqn: point configuration stratum} is therefore the stack quotient
\[ [T[n]/T_\uprho]. \]
This is precisely the description of the torus orbit $P_{\Lambda, \uprho} \subseteq P_\Lambda$ corresponding to the cone $\uprho \in \Pi_\Lambda$.
\end{proof}

\begin{remark} The anchor point $x_0= 1 \in T$ remains stationary as the target expands. The corresponding vertex $v_0 \in \Lambda_\uprho$ is the origin in $V$ and $x_0$ is the identity element of the torus $T=T_{v_0} \subseteq Y_{v_0}$.
\end{remark}

\begin{remark}
Consider the degenerate case $\uprho=0$. The fibre $Y_0$ is the generic fibre of $\Ycal_\Lambda \to P_\Lambda$. It is the complete toric variety corresponding to the asymptotic fan of $\Lambda$ \cite[Section~3]{NishinouSiebert}. There is no rubber torus. Since a point configuration on the interior of $Y_0$ is nothing more than a point configuration on $T$, we identify the interior of $P_\Lambda$ with the moduli space $T[n]$ as expected.
\end{remark}

\begin{example}\label{ex: stacky example}
We provide an example demonstrating the necessity of stacky structures in general. Set $d=n=1$ and coordinatise the ambient space as $V=\R_x$. The position $a_1$ of the unanchored point $p_1$ coordinatises the moduli space $V[1] = \R_{a_1}$ and the projection is
\[ V[1] \times V = \R^2_{a_1 x} \xrightarrow{\uppi} \R_{a_1} = V[1].\]
We consider the following scaffold $\Lambda$:
\[
\begin{tikzpicture}[scale=1.4]
\filldraw [gray] (0, 3) circle (1pt);
\filldraw [gray] (0.5, 3) circle (1pt);
\filldraw [gray] (1, 3) circle (1pt);
\filldraw [gray] (1.5, 3) circle (1pt);
\filldraw [gray] (2., 3) circle (1pt);
\filldraw [gray] (2.5, 3) circle (1pt);
\filldraw [gray] (3, 3) circle (1pt);

\filldraw [gray] (0, 2.5) circle (1pt);
\filldraw [gray] (0.5, 2.5) circle (1pt);
\filldraw [gray] (1, 2.5) circle (1pt);
\filldraw [gray] (1.5, 2.5) circle (1pt);
\filldraw [gray] (2., 2.5) circle (1pt);
\filldraw [gray] (2.5, 2.5) circle (1pt);
\filldraw [gray] (3, 2.5) circle (1pt);

\filldraw [gray] (0, 2) circle (1pt);
\filldraw [gray] (0.5, 2) circle (1pt);
\filldraw [gray] (1, 2) circle (1pt);
\filldraw [gray] (1.5, 2) circle (1pt);
\filldraw [gray] (2, 2) circle (1pt);
\filldraw [gray] (2.5, 2) circle (1pt);
\filldraw [gray] (3, 2) circle (1pt);

\filldraw [gray] (0, 1.5) circle (1pt);
\filldraw [gray] (0.5, 1.5) circle (1pt);
\filldraw [gray] (1, 1.5) circle (1pt);
\filldraw [gray] (1.5, 1.5) circle (1pt);
\filldraw [gray] (2., 1.5) circle (1pt);
\filldraw [gray] (2.5, 1.5) circle (1pt);
\filldraw [gray] (3, 1.5) circle (1pt);

\filldraw [gray] (0, 1) circle (1pt);
\filldraw [gray] (0.5, 1) circle (1pt);
\filldraw [gray] (1, 1) circle (1pt);
\filldraw [gray] (1.5, 1) circle (1pt);
\filldraw [gray] (2., 1) circle (1pt);
\filldraw [gray] (2.5, 1) circle (1pt);
\filldraw [gray] (3, 1) circle (1pt);

\draw [dashed,gray] (-0.25,3.25) -- (3.25,3.25) -- (3.25,0.75) -- (-0.25,0.75) -- (-0.25,3.25);

\draw [->] (-0.5,0.5) -- (-0.5,1);
\draw (-0.5,1) node[above]{\small$x$};
\draw [->] (-0.5,0.5) -- (0,0.5);
\draw (0,0.5) node[right]{\small$a_1$};

\draw [thick] (-0.25,2) -- (3.25,2);
\draw (3.25,2) node[right]{\small$x=0$};

\draw [thick] (0.25,0.75) --(2.75,3.25);
\draw (2.75,3.25) node[above]{\small$x=a_1$};

\draw [thick] (-0.25, 1.125) -- (3.25,2.875);
\draw (3.25,2.875) node[right]{\small$x=a_1/2$};

\end{tikzpicture}
\]
For fixed $a_1$ the corresponding vertical slice of $\Lambda$ gives a polyhedral decomposition of $V=\R_x$. For $a_1 \geq 0$ this is
\[
\begin{tikzpicture}[scale=1.6];
\draw[<->,gray] (-2,0) -- (2,0);

\draw (0,0) node[above]{\small$p_0$};
\filldraw[black] (0,0) circle (1pt);

\filldraw[black] (0.5,0) circle (1pt);

\filldraw[black] (1,0) circle (1pt);
\draw (1,0) node[above]{\small$p_1$};

\draw (0,-0.1) -- (0,-0.15) -- (0.5,-0.15) -- (0.5,-0.1);
\draw (0.25,-0.075) node[below]{\tiny$a_1/2$};

\draw (0,-0.2) -- (0,-0.4) -- (1,-0.4) -- (1,-0.2);
\draw (0.5,-0.35) node[below]{\tiny$a_1$};

\end{tikzpicture}
\]
and similarly for $a_1 \leq 0$. The empty vertex between $p_0$ and $p_1$ corresponds to the ray $x=a_1/2$. The universal weak semistable reduction algorithm with respect to the horizontal projection produces the following stacky fan $\Pi_\Lambda$ on $V[1]=\R_{a_1}$
\[
\begin{tikzpicture}[scale=1.6];
\draw[<->] (-3,0) -- (3,0);

\draw (0,0.05) node[above]{\small$0$};
\filldraw[black] (0,0) circle (1pt);
\filldraw[black] (-0.5,0) circle (1pt);
\filldraw[black] (-1,0) circle (1pt);
\filldraw[black] (-1.5,0) circle (1pt);
\filldraw[black] (-2,0) circle (1pt);
\filldraw[black] (-2.5,0) circle (1pt);
\filldraw[black] (0.5,0) circle (1pt);
\filldraw[black] (1,0) circle (1pt);
\filldraw[black] (1.5,0) circle (1pt);
\filldraw[black] (2,0) circle (1pt);
\filldraw[black] (2.5,0) circle (1pt);

\draw (0,0) circle[radius=2pt];
\draw (1,0) circle[radius=2pt];
\draw (2,0) circle[radius=2pt];
\draw (-1,0) circle[radius=2pt];
\draw (-2,0) circle[radius=2pt];

\end{tikzpicture}
\]
The stacky sublattices $L_\uprho$ are indicated with open circles. The corresponding toric stack $P_\Lambda$ is the square root stack \cite{CadmanRoot} of $\PP^1$ at $0$ and $\infty$. It has two points with $\mu_2$ isotropy. Theorem~\ref{thm: strata} provides a modular interpretation for this isotropy. As $x_1$ approaches infinity the target breaks into a chain of three projective lines
\[
\begin{tikzpicture}

\draw (0,0) to [out=40,in=180] (1,0.4) to[out=0,in=140] (2,0);
\draw[fill=black] (1,0.4) circle[radius=2pt];
\draw (1,0.4) node[above]{\small$x_0$};

\draw (1.6,0) to [out=40,in=180] (2.6,0.4) to[out=0,in=140] (3.6,0);

\draw (3.2,0) to [out=40,in=180] (4.2,0.4) to[out=0,in=140] (5.2,0);
\draw[fill=black] (4.2,0.4) circle[radius=2pt];
\draw (4.2,0.4) node[above]{\small$x_1$};
	
\end{tikzpicture}
\]
The stacky sublattice is coordinatised by $a_1/2$ and therefore the tropical position map $\Z \to \Z$ for the vertex $v_1$ is multiplication by $2$. It follows that the rubber action on the component containing $x_1$ has generic stabiliser $\mu_2$. This endows the point configuration with a nontrivial automorphism.

The above scaffold is clearly not minimal: see Question~\ref{question: minimal}.
\end{example}

\section{Dimension one: permutahedral variety} \label{sec: permutahedron} 

\noindent Set $d=1$ so that $V \cong \R$. A point configuration $(p_1,\ldots,p_n) \in V[n]$ determines a canonical polyhedral decomposition of $V$:
\[
\begin{tikzpicture}[scale=1.6];
\draw[<->] (-2,0) -- (2,0);

\draw (0,0) node[above]{\small$p_0$};
\filldraw[black] (0,0) circle (1pt);

\filldraw[black] (1,0) circle (1pt);
\draw (1,0) node[above]{\small$p_1$};

\filldraw[black] (-1.5,0) circle(1pt);
\draw (-1.5,0) node[above]{\small$p_2$};

\filldraw[black] (1.5,0) circle(1pt);
\draw (1.5,0) node[above]{\small$p_3$};
\end{tikzpicture}
\]
Consequently there is a unique minimal (with respect to refinement) tropical scaffold $\Lambda_0$. We show that the associated configuration fan $\Pi_{\Lambda_0}$ is the permutahedral fan. This recovers the identification of the Losev--Manin moduli space with the permutahedral variety \cite[Section~2.6]{LosevManin}.

\subsection{Fans from hyperplane arrangements} \label{sec: fan from hyperplane arrangement} We recall a useful general construction. Fix a lattice $N$ with associated vector space $V=N \otimes \R$. Given a pair $(a,b)$ of distinct linear functions on $N$ we obtain a hyperplane
\[ H \colonequals \{ a = b\} \subseteq V.\]
Consider a collection of such pairs $(a_1,b_1),\ldots,(a_k,b_k)$ with associated hyperplanes $H_1,\ldots,H_k$.

\begin{lemma} \label{lem: fan from hyperplane arrangement} There is a unique minimal complete fan $\Sigma$ on $N$ such that each $H_i$ is a union of cones of $\Sigma$.
\end{lemma}

\begin{proof} The idea is to subdivide $V$ into regions on which comparisons between $a_i$ and $b_i$ have been fixed for all $i$. The cones of $\Sigma$ are induced by ordered partitions
\[ I = (I_0,I_+,I_-), \qquad I_0 \sqcup I_+ \sqcup I_- = \{1,\ldots,k\}\]
with corresponding cones:
\[ \upsigma_I \colonequals \bigcap_{i \in I_0} \{a_i=b_i\} \bigcap_{i \in I_+} \{a_i \geq b_i\} \bigcap_{i \in I_-} \{a_i \leq b_i\}. \]
Each $H_i$ is the union of those cones $\sigma_I$ with $i \in I_0$, and $\Sigma$ is clearly minimal with this property. Note that different ordered partitions may induce the same cone.
\end{proof}

\subsection{Minimal scaffold} Recall from Section~\ref{sec: scaffold} the universal tropical family
\[
\begin{tikzcd}
V[n] \times V \ar[d,"\uppi" left] \\
V[n]. \ar[u,bend right,"{p_0,p_1,\ldots,p_n}" right]
\end{tikzcd}
\]
The key point is that for $d=1$, the image of each section $p_i$ is a hyperplane. We write
\[ H_i \colonequals p_i(V[n]).\]
The identification $V[n]=V^n$ furnishes coordinates $a_1,\ldots,a_n$ on $V[n]$; by convention we also set $a_0=0$. Let $x$ denote the standard coordinate on $V$. The hyperplane $H_i$ is then given by
\[ H_i = \{x = a_i\}.\]
Recall that a tropical scaffold is a complete fan on $V[n] \times V$ such that each $H_i$ is a union of cones.

\begin{lemma} \label{lem: unique minimal scaffold} There is a unique minimal tropical scaffold $\Lambda_0$.	
\end{lemma}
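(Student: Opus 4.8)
The plan is to construct the minimal scaffold explicitly and then show that every tropical scaffold refines it. Since a refinement of a scaffold is again a scaffold (as noted before the statement), ``unique minimal scaffold'' means ``coarsest scaffold'', so it suffices to exhibit a scaffold that all others refine. For each $i \in \{0,1,\ldots,n\}$ let $\Delta_i$ be the complete fan on $V[n]\times V$ whose maximal cones are the two closed half-spaces bounded by the hyperplane $H_i$; this is visibly a tropical scaffold for the single section $p_i$. I would then take $\Lambda_0 \colonequals \Delta_0 \wedge \cdots \wedge \Delta_n$, the common refinement. A common refinement of finitely many complete fans is again a complete fan, and because $\Lambda_0$ refines each $\Delta_i$, every cone of $\Delta_i$ --- in particular $H_i$ --- is a union of cones of $\Lambda_0$; hence $\Lambda_0$ is a tropical scaffold. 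Concretely $\Lambda_0$ is the fan of the central hyperplane arrangement $\{A_0 = 0, \ldots, A_n = 0\}$ (with $A_0\colonequals x$), and its fibre over $(a_1,\ldots,a_n)\in V[n]$ is the subdivision of $V=\R$ with vertices $a_0,a_1,\ldots,a_n$ pictured above --- precisely the canonical polyhedral decomposition.

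The heart of the argument is the claim that an arbitrary tropical scaffold $\Lambda$ refines $\Lambda_0$; as both are complete fans on $V[n]\times V$, it is enough to show that $\Lambda$ refines each $\Delta_i$, i.e. that every cone $\sigma\in\Lambda$ lies in $H_i$ or in one of the two closed half-spaces it bounds. Fix $i$ and $\sigma$. Writing $H_i$ as the union of those cones of $\Lambda$ contained in it, and using the fan axiom that $\sigma\cap\tau$ is a face of $\sigma$ for each cone $\tau\in\Lambda$, one sees that $\sigma\cap H_i$ is a union of faces of $\sigma$. If $\sigma\subseteq H_i$ we are done; otherwise none of these faces is $\sigma$ itself, so $\sigma\cap H_i$ is a union of proper faces and $\operatorname{relint}(\sigma)$ misses $H_i$. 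Since $\operatorname{relint}(\sigma)$ is convex, hence connected, it lies in a single open half-space bounded by $H_i$, and therefore $\sigma=\overline{\operatorname{relint}(\sigma)}$ lies in the corresponding closed half-space. Thus $\Lambda$ refines each $\Delta_i$, hence refines $\Lambda_0$, which is therefore the unique coarsest --- equivalently, the unique minimal --- tropical scaffold.

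I do not anticipate a genuine obstacle: the proof is almost entirely formal. The only points requiring care are elementary --- verifying that the common refinement $\Lambda_0$ is a complete fan that remains a scaffold, and the ``one side of $H_i$'' step, which relies on a polyhedral cone being the closure of its (connected) relative interior. This last step is precisely where the hypothesis $d=1$, i.e. that each $p_i(V[n])$ is a hyperplane, is used: for $d\geq 2$ the section images have codimension $d$, there is no canonical ``slab fan'' $\Delta_i$ to refine, and (as the introduction notes) multiple minimal scaffolds occur.
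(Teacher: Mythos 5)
Your proposal is correct and follows essentially the same route as the paper: both identify $\Lambda_0$ as the complete fan induced by the hyperplane arrangement $\{A_0=0,\ldots,A_n=0\}$ (your common refinement of the slab fans $\Delta_i$ has exactly the cones $\uplambda_I$ indexed by ordered partitions $(I_0,I_+,I_-)$ that the paper writes down) and then assert minimality. The one difference is that the paper dismisses minimality as ``clearly'' following from a general fact about hyperplane-arrangement fans, whereas you actually prove it --- the observation that $\sigma\cap H_i$ is a union of faces of $\sigma$, so that $\operatorname{relint}(\sigma)$ avoids $H_i$ and by connectedness lies in one open half-space --- which is precisely the standard argument behind that general fact and a worthwhile addition.
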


\begin{proof} This follows from Lemma~\ref{lem: fan from hyperplane arrangement}. Explicitly, the cones of $\Lambda_0$ are indexed by ordered partitions $I = (I_0,I_+,I_-)$ where $I_0 \sqcup I_+ \sqcup I_- = \{0,1,\ldots,n\}$, and the corresponding cones are:
\begin{equation} \label{eqn: definition of lambdaI} \uplambda_I \colonequals \bigcap_{i \in I_0} \{ x = a_i \} \bigcap_{i \in I_+} \{ x \geq  a_i \} \bigcap_{i \in I_-} \{ x \leq a_i \}. \qedhere \end{equation}
\end{proof}

\subsection{Permutahedral fan} Let $\Sigma_n$ denote the permutahedral fan on $V[n]$. Conceptually this stratifies $V[n]$ into regions over which the functions $a_0,a_1,\ldots,a_n$ have a fixed ordering. Geometrically this corresponds to a fixed ordering of the points $p_0,p_1,\ldots,p_n$. 

Formally the cones of $\Sigma_n$ are indexed by total preorders on $\{0,1,\ldots,n\}$. We encode these as ordered partitions
\[ J=(J_1, \ldots, J_m), \qquad J_i \neq \emptyset,\ J_1 \sqcup \ldots \sqcup J_m = \{0,1,\ldots,n\}. \]
The corresponding cones are
\begin{equation} \label{eqn: definition of deltaJ} \upsigma_J \colonequals \bigcap_{\substack{1 \leq i \leq m \\ a_k,a_l \in J_i}} \{ a_k=a_l\} \bigcap_{\substack{1 \leq i < j \leq m \\ a_i \in J_i, a_j \in J_j}} \{ a_i \leq a_j \}. \end{equation}
\begin{theorem} \label{thm: get permutahedron} The configuration fan $\Pi_{\Lambda_0}$ is equal to the permutahedral fan $\Sigma_n$.	
\end{theorem}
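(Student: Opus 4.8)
The plan is to unwind the explicit description of the stacky Chow quotient recalled after Theorem~\ref{thm: semistable reduction}. There the underlying fan of $\Pi_{\Lambda_0}$ is the overlay of the images $\uppi(\uplambda_I) \subseteq V[n]$, and its stacky sublattices are obtained by intersecting the images $\uppi(N_{\uplambda_I})$. Accordingly the proof splits into (i) identifying the underlying fan with the permutahedral (braid) fan $\Sigma_n$, and (ii) checking that all sublattices are trivial.

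For (i), I would first compute the projected cones. Eliminating the coordinate $x$ from \eqref{eqn: definition of lambdaI} (and recalling $a_0 = 0$) shows that, when $I_0 = \emptyset$,
\[ \uppi(\uplambda_I) = \{\, a \in V[n] \ :\ a_i \leq a_j \ \text{ for all } i \in I_+,\ j \in I_- \,\}, \]
while when $I_0 \neq \emptyset$ one additionally imposes that $a$ be constant on $I_0$, with that common value $\geq a_i$ for $i \in I_+$ and $\leq a_j$ for $j \in I_-$. In particular every $\uppi(\uplambda_I)$ is a polyhedral cone cut out by hyperplanes of the braid arrangement $\{a_i = a_j : 0 \leq i < j \leq n\}$, hence is a union of cones of $\Sigma_n$; this already gives that the overlay is coarser than or equal to $\Sigma_n$. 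For the reverse inclusion I would exhibit every braid hyperplane as a facet hyperplane of some projected cone: taking $I_+ = \{m\}$ with $m = \min(i,j)$ and $I_- = \{0,\dots,n\} \setminus \{m\}$ produces the full-dimensional cone $\uppi(\uplambda_I) = \{a : a_m \leq a_k \text{ for all } k\}$, whose facet hyperplanes are precisely the $\{a_m = a_k\}$, among them $\{a_i = a_j\}$. Since a complete fan in which every $\uppi(\uplambda_I)$ is a union of cones must contain an $(n-1)$-dimensional cone inside each such facet hyperplane, the overlay cannot be strictly coarser than $\Sigma_n$, so the two fans coincide. Along the way one should check that the pullback subdivision $\uppi^{-1}(\Sigma_n) \wedge \Lambda_0$ of $\Lambda_0$ maps each of its cones onto a cone of $\Sigma_n$; this is immediate once one notes that the intersection of a cone $\upsigma_J \in \Sigma_n$ with a convex cone that is a union of cones of $\Sigma_n$ is necessarily a single face of $\upsigma_J$.

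For (ii) it is cleanest to change coordinates. Writing $y_i \colonequals A_i = x - a_i$ for $i = 0,\dots,n$, so that $y_0 = x$ and $a_i = y_0 - y_i$, identifies $\Lambda_0$ with the coordinate fan $\Sigma(\PP^1)^{n+1}$ on $N[n] \times N = \Z^{n+1}$, under which $\uppi$ becomes quotient by the diagonal line. Then $N_{\uplambda_I}$ is the coordinate sublattice $\Z^{I_+ \cup I_-} \subseteq \Z^{n+1}$, and since $\uppi$ sends $e_0$ to $(1,\dots,1)$ and $e_i$ to $-e_i$ for $i \geq 1$, its image $\uppi(N_{\uplambda_I})$ is always saturated in $\Z^n$: it is a coordinate sublattice when $0 \notin I_+ \cup I_-$, and otherwise equals $\{v \in \Z^n : v_k = v_{k'} \text{ for all } k, k' \in I_0\}$. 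An intersection of saturated sublattices is saturated, and since each stacky sublattice of $\Pi_{\Lambda_0}$ is by construction of finite index in the corresponding $N_{\uprho}$, this forces $L_{\uprho} = N_{\uprho}$ for every cone $\uprho$. Hence $\Pi_{\Lambda_0} = \Sigma_n$ as stacky fans.

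I expect the main obstacle to lie in step~(i), specifically in making rigorous that the overlay is not strictly coarser than $\Sigma_n$: this requires some care about how the weakly semistable, cone-onto-cone requirement interacts with the facets of the projected cones. One could bypass the issue by invoking the classical identification of the Chow quotient of $(\PP^1)^{n+1}$ by the diagonal torus with the permutahedral variety \cite{KapranovSturmfelsZelevinsky, LosevManin} through the coordinate change above, but the self-contained combinatorial argument fits better with the remainder of this section.
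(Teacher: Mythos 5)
Your overall strategy is sound, and in one respect cleaner than the paper's: the observation that every image $\uppi(\uplambda_I)$ is cut out by braid hyperplanes, hence is a union of cones of $\Sigma_n$, immediately gives that the overlay is coarser than or equal to $\Sigma_n$, whereas the paper obtains this direction by constructing the refinement $\Lambda_0^\prime = \Sigma_{n+1}$ and invoking the universal property of Theorem~\ref{thm: semistable reduction} together with weak semistability of $\Sigma_{n+1} \to \Sigma_n$. Your step~(ii) is also correct and tidy: the change of coordinates $y_i = x - a_i$ does identify $\Lambda_0$ with the fan of $(\PP^1)^{n+1}$, each $\uppi(N_{\uplambda_I})$ is saturated, and a saturated finite-index sublattice of $N_\uprho$ must equal $N_\uprho$; the paper instead disposes of the stacky structure implicitly through the universal property.

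The gap is exactly where you suspect it, and it is genuine. Knowing that the overlay contains \emph{some} $(n-1)$-dimensional cone inside each hyperplane $\{a_i = a_j\}$ does not prevent it from being strictly coarser than $\Sigma_n$: already for $n=2$, deleting a single ray from the braid fan (merging the two chambers adjacent along it) yields a strictly coarser complete fan that still meets all three lines in rays. What you must show is that \emph{every} wall of $\Sigma_n$ --- the common facet of every pair of adjacent maximal cones --- lies in the codimension-one skeleton of the overlay, and the cones with $I_+$ a singleton do not achieve this: $\uppi(\uplambda_I) = \{a_m \leq a_k \ \forall k\}$ contains a chamber of $\Sigma_n$ if and only if $a_m$ is minimal there, so it distinguishes two adjacent chambers only when the transposed pair sits at the bottom of the total order. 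The fix is to use all two-block partitions: for $I_0 = \emptyset$, $I_+ = J_1$, $I_- = J_2$ with $J_1 \sqcup J_2 = \{0,\ldots,n\}$ one gets $\uppi(\uplambda_I) = \{a_k \leq a_l : k \in J_1,\ l \in J_2\}$, and these cones do separate every pair of adjacent chambers, since their common refinement is the braid fan. This is in effect what the paper does, writing each $\upsigma_J$ explicitly as $\uppi(\uplambda_{I(1)}) \cap \cdots \cap \uppi(\uplambda_{I(m)})$ and hence as a union of overlay cones. Your proposed bypass via the Kapranov--Sturmfels--Zelevinsky identification of the Chow quotient of $(\PP^1)^{n+1}$ by the diagonal would also be legitimate, given your coordinate change, but as written the self-contained argument for this direction does not close.
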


\begin{proof} We first show that $\Pi_{\Lambda_0}$ is a refinement of $\Sigma_n$. It suffices to show that every cone of $\Sigma_n$ is an intersection of images of cones of $\Lambda_0$. Fix $\upsigma_J \in \Sigma_n$ corresponding to the total preorder $J=(J_1,\ldots,J_m)$. For each $i \in \{1,\ldots,m\}$ we define the partition $I(i) = (I_0(i),I_+(i),I_-(i))$ by:
\begin{align*}
	I_0(i) & \colonequals J_i, \\
	I_+(i) & \colonequals J_1 \sqcup \ldots \sqcup J_{i-1}, \\
	I_-(i) & \colonequals J_{i+1} \sqcup \ldots \sqcup J_m.	\end{align*}
By \eqref{eqn: definition of lambdaI} on the corresponding cone $\uplambda_{I(i)} \in \Lambda_0$ we have
\begin{align*} x & = a_j \qquad \text{for $j \in J_i$},\\
x & \geq a_j \qquad \text{for $j \in J_1 \sqcup \ldots \sqcup J_{i-1}$},	\\
x & \leq a_j \qquad \text{for $j \in J_{i+1} \sqcup \ldots \sqcup J_m$}.
\end{align*}
Projecting away from the coordinate $x$ we obtain
\[ \uppi(\uplambda_{I(i)}) = \bigcap_{a_k,a_l \in J_i} \{ a_k = a_l \} \bigcap_{\substack{1 \leq j < i\\ a_j \in J_j\\ a_i \in J_i}} \{ a_j \leq a_i \} \bigcap_{\substack{i < j \leq m\\ a_i \in J_i\\ a_j \in J_j}} \{ a_i \leq a_j \}.\]
It follows from \eqref{eqn: definition of deltaJ} that $\uppi(\uplambda_{I(1)}) \cap \cdots \cap \uppi(\uplambda_{I(m)}) = \upsigma_J$ (in general there is no single cone $\uplambda \in \Lambda_0$ whose image is $\upsigma_J$). We conclude that there is a refinement
\[ \Pi_{\Lambda_0} \to \Sigma_n.\]
To prove that this is an equality, we invoke the universal property of $\Pi_{\Lambda_0}$ (Theorem~\ref{thm: semistable reduction}). It suffices to construct a refinement $\Lambda_0^\prime \to \Lambda_0$ such that $\uppi$ is a weakly semistable map of fans $\Lambda_0^\prime \to \Sigma_n$.

The fan $\Lambda_0^\prime$ is constructed as the minimal common refinement of $\Lambda_0$ and the preimage of $\Sigma_n$ under $\uppi$. Since both $\Lambda_0$ and $\Sigma_n$ are defined by hyperplanes, this can be described explicitly. We see from \eqref{eqn: definition of lambdaI} that the hyperplanes defining $\Lambda_0$ impose comparisons between each of $a_0,a_1,\ldots,a_n$ and $x$. The hyperplanes defining $\Sigma_n$ imposes pairwise comparisons amongst $a_0,a_1,\ldots,a_n$, and the same is true of their pullbacks under $\uppi$. The union of these hyperplanes therefore impose pairwise comparisons amongst $a_0,a_1,\ldots,a_n,x$. We conclude that
\[ \Lambda_0^\prime = \Sigma_{n+1} \]
with the coordinate $x$ playing the role of $a_{n+1}$. It is well-known and not hard to check that projecting away from this coordinate gives a weakly semistable map of fans $\Sigma_{n+1} \to \Sigma_n$. This completes the proof.
\end{proof}

The configuration space $P_{\Lambda_0}$ is therefore identified with the Losev--Manin moduli space. The universal tropical expansion and point configuration coincide with the universal curve and marking sections
\[
\begin{tikzcd}
\Ycal_{\Lambda_0^\prime} \ar[d,"\uppi" left] \\
P_{\Lambda_0}. \ar[u,bend right,"{x_0,x_1,\ldots,x_n}" right]
\end{tikzcd}
\]
This follows from the identification $\Lambda_0^\prime=\Sigma_{n+1}$ and the fact that the forgetful map $\Sigma_{n+1} \to \Sigma_n$ induces the universal curve over the Losev--Manin moduli space \cite[Section~2.1]{LosevManin}.

The rubber action on the universal expansion coincides with the automorphisms of the universal Losev--Manin curve which fix the two heavy markings and $x_0$. This is illustrated in the following example.

\begin{example} Consider the following tropical point configuration with $n=3$:
\[
\begin{tikzpicture}[scale=1.6];
\draw[<->] (-3,0) -- (3,0);

\draw (0,0) node[above]{\small$p_0$};
\filldraw[black] (0,0) circle (1pt);

\filldraw[black] (1,0) circle (1pt);
\draw (1,0) node[above]{\small$p_1$};
\draw (1,0.2) node[above]{\small$p_2$};

\filldraw[black] (2,0) circle(1pt);
\draw (2,0) node[above]{\small$p_3$};

\end{tikzpicture}
\]
This defines a total preorder $J = (J_1,J_2,J_3) = (\{0\},\{1,2\},\{3\})$ indexing a two-dimensional cone $\upsigma_J \in \Pi_{\Lambda_0}=\Sigma_3$. This cone is defined by the (in)equalities
\[ a_0 \leq a_1 = a_2 \leq a_3.\]
It follows that the primitive positive coordinates on the cone $\upsigma_J$ are
\[ a_1-a_0, \qquad a_3-a_1\]
which appear geometrically as the edge lengths in the polyhedral decomposition
\[
\begin{tikzpicture}[scale=1.6];
\draw[<->] (-3,0) -- (3,0);

\draw (0,0) node[above]{\small$p_0$};
\filldraw[black] (0,0) circle (1pt);

\filldraw[black] (1,0) circle (1pt);
\draw (1,0) node[above]{\small$p_1$};
\draw (1,0.2) node[above]{\small$p_2$};

\filldraw[black] (2,0) circle(1pt);
\draw (2,0) node[above]{\small$p_3$};

\draw (0.05,-0.1) -- (0.05,-0.2) -- (0.95,-0.2) -- (0.95,-0.1);
\draw (0.5,-0.175) node[below]{\small$a_1-a_0$};

\draw (1.05,-0.1) -- (1.05,-0.2) -- (1.95,-0.2) -- (1.95,-0.1);
\draw (1.5,-0.175) node[below]{\small$a_3-a_1$};

\end{tikzpicture}
\]
The corresponding topical expansion is a chain of three projective lines
\[
\begin{tikzpicture}

\draw (0,0) to [out=40,in=180] (1,0.4) to[out=0,in=140] (2,0);
\draw[fill=black] (1,0.4) circle[radius=2pt];
\draw (1,0.4) node[above]{\small$x_0$};
\draw (1,0) node[below]{\small$C_0$};

\draw (1.6,0) to [out=40,in=180] (2.6,0.4) to[out=0,in=140] (3.6,0);
\draw (2.6,0) node[below]{\small$C_1$};
\draw[fill=black] (2.2,0.35) circle[radius=2pt];
\draw (2.2,0.35) node[above]{\small$x_1$};
\draw[fill=black] (3,0.35) circle[radius=2pt];
\draw (3,0.35) node[above]{\small$x_2$};

\draw (3.2,0) to [out=40,in=180] (4.2,0.4) to[out=0,in=140] (5.2,0);
\draw[fill=black] (4.2,0.4) circle[radius=2pt];
\draw (4.2,0.4) node[above]{\small$x_3$};
\draw (4.2,0) node[below]{\small$C_2$};
	
\end{tikzpicture}
\]
The two-dimensional rubber torus $T_{\upsigma_J}=N_{\upsigma_J} \otimes \Gm$ is canonically coordinatised by $e_1 \colonequals a_1-a_0$ and $e_2 \colonequals a_3-a_1$. With respect to these coordinates the action on the components $C_1$ and $C_2$ has weights $e_1$ and $e_1+e_2$ respectively. In the latter case this is because the position of the corresponding vertex is $e_1+e_2=a_3-a_0$.

 Changing coordinates from $(e_1,e_2)$ to $(e_1,e_1+e_2)$ produces a split torus whose factors act independently on the components $C_1$ and $C_2$. This coincides with the automorphisms of the curve obtained by forgetting the markings $x_1,x_2,x_3$ and introducing a heavy marking on each end component, as in the Losev--Manin moduli space. The rigidification $x_0=1 \in T$ means that this marking does not contribute to moduli and is not forgotten. This is consistent with the fact that the rubber torus acts trivially on $C_0$.
\end{example}

\section{Dimension two: bipermutahedral variety} \label{sec: dim 2}

\noindent For $d \geq 2$ there is no longer a unique minimal choice of scaffold. We introduce two scaffolds of particular interest, and show that the associated configuration spaces are the square of the permutahedral variety (Section~\ref{sec: square of permutahedron}) and the bipermutahedral variety (Section~\ref{sec: bipermutahedron}).

\subsection{Coordinates} Set $d=2$. We coordinatise the ambient space as
\[ V = \R^2_{xy} \]
which also produces coordinates on the configuration space
\[ V[n] = \R^{2n}_{a_1 b_1 \ldots a_n b_n}\]
where $(a_i,b_i)$ is the position of $p_i$. The anchor point $p_0$ has coordinates $a_0=b_0=0$.

\subsection{Square of the permutahedral fan} \label{sec: square of permutahedron} Given a point configuration $(p_1,\ldots,p_n) \in V[n]$ we obtain a polyhedral decomposition of $V$ by slicing with the horizontal and vertical lines passing through the points $p_i$
\[
\begin{tikzpicture}[scale=1.2]

\draw (5,1.75) node[left]{\small$V$};
\draw [dashed] (5,-2) -- (5,2) -- (9,2) -- (9,-2) -- (5,-2);

\draw (5,-1) -- (9,-1);
\draw(9,-1) node[right]{\small$y=b_0$};

\draw (5,0.8) -- (9,0.8);
\draw (9,0.8) node[right]{\small$y=b_1$};

\draw (5,1.4) -- (9,1.4);
\draw (9,1.4) node[right]{\small$y=b_2$};

\draw (6,2) -- (6,-2);
\draw (6,-2) node[below]{\small$x=a_2$};

\draw (7,2) -- (7,-2);
\draw (7,-2) node[below]{\small$x=a_0$};

\draw (8,2) -- (8,-2);
\draw (8,-2) node[below]{\small$x=a_1$};

\draw (7.25,-1) node[above]{\small$p_0$};
\draw[fill=black] (7,-1) circle[radius=2pt];

\draw (8.25,0.8) node[above]{\small$p_1$};
\draw[fill=black] (8,0.8) circle[radius=2pt];

\draw (6.25,1.4) node[above]{\small$p_2$};
\draw[fill=black] (6,1.4) circle[radius=2pt];

\end{tikzpicture}
\]
This defines a tropical scaffold $\Lambda_{S}$ ($S$ for ``square''). The horizontal and vertical lines above correspond to hyperplanes in $V[n] \times V$:

\begin{definition} \label{defn: scaffold square of permutahedron} The scaffold $\Lambda_{S}$ is the complete fan on $V[n] \times V$ induced (as in Section~\ref{sec: fan from hyperplane arrangement}) by the following arrangement of hyperplanes:
\begin{equation} \label{eqn: hyperplanes for LambdaS} \{ x = a_0 \} , \ldots, \{ x\, = a_n\}, \{ y = b_0 \}, \ldots, \{ y = b_n\}. \end{equation}
\end{definition}
The polyhedral complex $\Lambda_{S,p}$ for $p = (a_1,b_1,\ldots,a_n,b_n) \in V[n]$ undergoes phase transitions whenever the ordering of $(a_0,\ldots,a_n)$ or $(b_0,\ldots,b_n)$ changes. This explains the following.

\begin{proposition}
	The configuration fan $\Pi_{\Lambda_{S}}$ is equal to the square $\Sigma_n \times \Sigma_n$ of the permutahedral fan.
\end{proposition}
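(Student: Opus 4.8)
The plan is to reduce to the one-dimensional result of Theorem~\ref{thm: get permutahedron} by exhibiting the scaffold $\Lambda_{S}$ and the projection $\uppi$ as products. Regroup the coordinates of $V[n]\times V=\R^{2n+2}$ into the block $(a_1,\ldots,a_n,x)$ and the block $(b_1,\ldots,b_n,y)$, giving an isomorphism $V[n]\times V\cong(\R^n\times\R_x)\times(\R^n\times\R_y)$ with each factor a copy of ``$V[n]\times V$ for $d=1$''; under this identification the projection to $V[n]$ is the product $\uppi_A\times\uppi_B$ of the two one-dimensional forgetful maps. The hyperplanes $\{x=a_i\}$ of \eqref{eqn: hyperplanes for LambdaS} involve only the first block and the hyperplanes $\{y=b_i\}$ only the second, so the two subarrangements have disjoint coordinate supports and the fan they jointly induce is the product fan $\Lambda_{S}=\Lambda_0\times\Lambda_0$, each factor being the unique minimal $d=1$ scaffold of Lemma~\ref{lem: unique minimal scaffold}.

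I would then invoke the compatibility of universal weak semistable reduction with products. By the explicit description of the terminal object of $\mathcal F$ recalled after Theorem~\ref{thm: semistable reduction}, the fan $\Pi_{\Lambda_{S}}$ is the overlay of the images $\uppi(\uplambda)$ for $\uplambda\in\Lambda_{S}$, with stacky lattices given by intersecting the images $\uppi(N_\uplambda)$. For a product cone $\uplambda=\uplambda_A\times\uplambda_B$ one has $\uppi(\uplambda)=\uppi_A(\uplambda_A)\times\uppi_B(\uplambda_B)$ and $\uppi(N_\uplambda)=\uppi_A(N_{\uplambda_A})\times\uppi_B(N_{\uplambda_B})$, and both the overlay of such a product family of cones and the intersection of such product sublattices split as products, which is immediate from the coordinatewise description of membership in a product cone. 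Hence $\Pi_{\Lambda_{S}}=\Pi_{\Lambda_0}\times\Pi_{\Lambda_0}=\Sigma_n\times\Sigma_n$ by Theorem~\ref{thm: get permutahedron}. Because the permutahedral fan is smooth its square is too, so the sublattices produced by the construction are all saturated and $\Pi_{\Lambda_{S}}$ is the honest (non-stacky) fan $\Sigma_n\times\Sigma_n$, matching the statement.

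Alternatively, mirroring the proof of Theorem~\ref{thm: get permutahedron}, one can establish the two inclusions of refinements directly. On the one hand $\Pi_{\Lambda_{S}}$ refines $\Sigma_n\times\Sigma_n$ because each cone $\upsigma_J\times\upsigma_{J'}$ is an intersection of images of cones of $\Lambda_{S}$: writing $\upsigma_J=\bigcap_i\uppi_A(\uplambda_{I(i)})$ and $\upsigma_{J'}=\bigcap_{i'}\uppi_B(\uplambda_{I'(i')})$ as in loc.\ cit., one has $\upsigma_J\times\upsigma_{J'}=\bigcap_{i,i'}\uppi(\uplambda_{I(i)}\times\uplambda_{I'(i')})$. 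On the other hand $\Sigma_n\times\Sigma_n$ is an object of $\mathcal F$: the refinement $\Sigma_{n+1}\times\Sigma_{n+1}$, with $x$ and $y$ in the roles of the extra coordinate, refines $\Lambda_{S}$ (each factor refines $\Lambda_0$ by Theorem~\ref{thm: get permutahedron}), and $\uppi=\uppi_A\times\uppi_B$ is weakly semistable since each factor $\Sigma_{n+1}\to\Sigma_n$ is and a product of weakly semistable maps of stacky fans is weakly semistable (one checks $\uppi(\uplambda_A\times\uplambda_B)=\uppi_A(\uplambda_A)\times\uppi_B(\uplambda_B)\in\Sigma_n\times\Sigma_n$, that the sublattice equality $\uppi(L_\uplambda)=L_{\uppi(\uplambda)}$ holds factorwise, and that surjectivity of the underlying lattice map persists). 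The universal property of the terminal object then forces $\Pi_{\Lambda_{S}}=\Sigma_n\times\Sigma_n$.

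The main point to note is that there is no new geometric input here beyond Theorem~\ref{thm: get permutahedron}: the whole argument is the bookkeeping that the product structure of $(\Lambda_{S},\uppi)$ survives universal weak semistable reduction. The one mildly delicate step is the first --- verifying that the arrangement \eqref{eqn: hyperplanes for LambdaS} genuinely induces the product fan $\Lambda_0\times\Lambda_0$, rather than some strictly finer fan --- which is immediate once one observes that the two subarrangements involve disjoint sets of coordinates.
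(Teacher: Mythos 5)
Your proposal is correct and follows essentially the same route as the paper: identify $V[n]\times V$ with the product of the two one-dimensional universal families, observe that the hyperplane arrangement \eqref{eqn: hyperplanes for LambdaS} splits across the two coordinate blocks so that $\Lambda_S=\Lambda_0\times\Lambda_0$, and conclude from Theorem~\ref{thm: get permutahedron} together with the compatibility of universal weak semistable reduction with external products. The paper simply asserts this compatibility, whereas you additionally justify it (via the overlay description of the terminal object and via a direct check of the universal property), which is a welcome elaboration rather than a divergence.
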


\begin{proof} There is a natural isomorphism
\[ \R^{2n}_{a_1 b_1 \ldots a_n b_n} \times \R^2_{xy} = (\R^n_{a_1 \ldots a_n} \times \R_x) \times (\R^n_{b_1 \ldots b_n} \times \R_y).\]
Under this isomorphism we have $\Lambda_{S} = \Lambda_0 \times \Lambda_0$ (compare Definition~\ref{defn: scaffold square of permutahedron} and the proof of Lemma~\ref{lem: unique minimal scaffold}). The result follows from Theorem~\ref{thm: get permutahedron}, since universal weak semistable reduction commutes with external products.
\end{proof}

\begin{example} \label{example: square of permutahedron} Consider the point configuration and scaffold in Figure~\ref{fig: square scaffold}. The corresponding cone $\uprho \in \Pi_{\Lambda_{S}} = \Sigma_n \times \Sigma_n$ is defined by the inequalities
\begin{equation} \label{eqn: inequalities defining square cone example} a_1 \leq a_0 \leq a_2,\qquad  b_0 \leq b_2 \leq b_1. \end{equation}
The fibre of the universal tropical expansion \eqref{eqn: universal family} over the locally-closed stratum $P_{\Lambda_{S},\uprho} \subseteq P_{\Lambda_S}$ is the patchwork quilt made of copies of $\PP^1\!\times\!\PP^1$ depicted in Figure~\ref{fig: square expansion}.
\begin{figure}[h]
\centering
\begin{subfigure}[b]{0.3\textwidth}
\[
\begin{tikzpicture}[scale=1]

\draw [dashed] (0,0) -- (5,0) -- (5,4.5) -- (0,4.5) -- (0,0);

\draw (0,1) -- (5,1);
\draw (1,0) -- (1,4.5);
\draw (0,3.5) -- (5,3.5);
\draw (4.5,0) -- (4.5,4.5);
\draw (0,2.5) -- (5,2.5);
\draw (3,0) -- (3,4.5);

\draw [decorate,decoration={brace,amplitude=5pt,mirror,raise=0.25ex}] (1.1,1) -- (2.9,1) node[midway,yshift=-11.5pt]{\small$e_1$};
\draw [decorate,decoration={brace,amplitude=5pt,mirror,raise=0.25ex}] (3.1,1) -- (4.4,1) node[midway,yshift=-11.5pt]{\small$e_2$};
\draw [decorate,decoration={brace,amplitude=5pt}] (0.95,1.1) -- (0.95,2.4) node[midway,xshift=-10pt]{\small$f_1$};
\draw [decorate,decoration={brace,amplitude=5pt}] (0.95,2.6) -- (0.95,3.4) node[midway,xshift=-10pt]{\small$f_2$};

\draw (3.25,1) node[above]{\small$p_0$};
\draw[fill=black] (3,1) circle[radius=2pt];

\draw (1.25,3.5) node[above]{\small$p_1$};
\draw[fill=black] (1,3.5) circle[radius=2pt];

\draw (4.25,2.5) node[above]{\small$p_2$};
\draw[fill=black] (4.5,2.5) circle[radius=2pt];

\end{tikzpicture}
\]
\caption{Scaffold}
\label{fig: square scaffold}
\end{subfigure}\qquad\qquad
\begin{subfigure}[b]{0.3\textwidth}
\[
\begin{tikzpicture}[scale=1.2]

\draw (0,0) -- (1,0) -- (1,1) -- (0,1) -- (0,0);
\draw (1,0) -- (2,0) -- (2,1) -- (1,1) -- (1,0);
\draw (2,0) -- (3,0) -- (3,1) -- (2,1) -- (2,0);

\draw (0,1) -- (1,1) -- (1,2) -- (0,2) -- (0,1);
\draw (1,1) -- (2,1) -- (2,2) -- (1,2) -- (1,1);
\draw (2,1) -- (3,1) -- (3,2) -- (2,2) -- (2,1);

\draw (0,2) -- (1,2) -- (1,3) -- (0,3) -- (0,2);
\draw (1,2) -- (2,2) -- (2,3) -- (1,3) -- (1,2);
\draw (2,2) -- (3,2) -- (3,3) -- (2,3) -- (2,2);

\draw[fill=black] (1.5,0.5) circle[radius=1.75pt];
\draw (1.55,0.5) node[below]{\small$x_0$};

\draw[fill=black] (0.6,2.6) circle[radius=1.75pt];
\draw (0.65,2.6) node[below]{\small$x_1$};

\draw[fill=black] (2.4,1.6) circle[radius=1.75pt];
\draw (2.45,1.6) node[below]{\small$x_2$};

\end{tikzpicture}
\]
\caption{Expansion}
\label{fig: square expansion}
\end{subfigure}
\caption{A stratum of $P_{\Lambda_S}$.}
\end{figure}
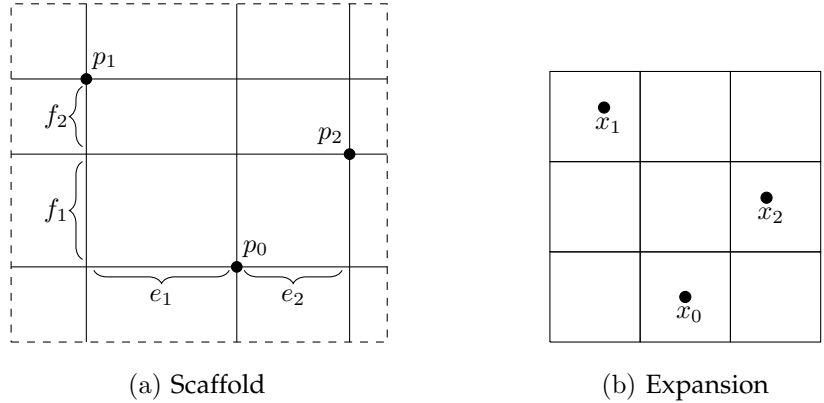

The polyhedral edge lengths labelled in Figure~\ref{fig: square scaffold} give the coordinate system on $\uprho$ dual to the basis of primitive ray generators. Indeed, the inequalities \eqref{eqn: inequalities defining square cone example} produce the primitive non-negative linear functions
\[ a_0 - a_1  = e_1, \qquad a_2 - a_0 = e_2, \qquad b_2 - b_0 = f_1, \qquad b_1 - b_2 = f_2.\]
We use Theorem~\ref{thm: strata} to give a modular description of $P_{\Lambda_S,\uprho}$. The rubber torus is $T_{\uprho} = (\Gm^4)_{e_1 e_2 f_1 f_2}$ and the rubber action is governed by the tropical position maps. For the vertices $v_i$ supporting the points $p_i$ these are
\begin{align*} 
\varphi_{v_0}(e_1,e_2,f_1,f_2) & = (a_0,b_0) = (0,0), \\
\varphi_{v_1}(e_1,e_2,f_1,f_2) & = (a_1,b_1) = (-e_1,f_1+f_2), \\
\varphi_{v_2}(e_1,e_2,f_1,f_2) & = (a_2,b_2) = (e_2,f_1).
\end{align*}
The positions of $x_1,x_2$ give $2+2=4$ dimensions of moduli. These are precisely cancelled out by the rubber action, so $P_{\Lambda_S,\uprho}$ is a point. This is consistent with the fact that $\uprho \in \Pi_{\Lambda_S}$ is maximal.
\end{example}

\subsection{Bipermutahedral fan} \label{sec: bipermutahedron}

We assume familiarity with the bipermutahedral fan. For an accessible introduction, see \cite[Sections~2.3--2.6]{ArdilaDenhamHuh} and \cite{ArdilaBipermutahedron}.

Ardila--Denham--Huh define the bipermutahedral fan $\Sigma_{n,n}$ as the tropical configuration space $V[n]$ stratified according to bisequence \cite[Section~2.4]{ArdilaDenhamHuh}. From our perspective the definition of bisequence suggests a choice of tropical scaffold, namely the subdivision
\[ \Lambda_B \to \Lambda_S \]
obtained by slicing each polyhedral decomposition $\Lambda_{S,p}$ with the supporting antidiagonal (compare with \cite[Figure~2]{ArdilaDenhamHuh}):
\begin{equation} \label{eqn: bipermutahedral scaffold} 
\begin{tikzpicture}[scale=1,baseline=(current  bounding  box.center)]

\draw [dashed] (5,-2) -- (5,2) -- (9,2) -- (9,-2) -- (5,-2);

\draw[blue,thick] (8,-2) -- (5,1);
\draw[blue] (8,-2) node[below]{\small$x+y=a_0+b_0$};

\draw (5,-1) -- (9,-1);

\draw (5,0.8) -- (9,0.8);

\draw (5,1.4) -- (9,1.4);

\draw (6,2) -- (6,-2);

\draw (7,2) -- (7,-2);

\draw (8,2) -- (8,-2);

\draw (7.25,-1) node[above]{\small$p_0$};
\draw[fill=black] (7,-1) circle[radius=2pt];

\draw (8.25,0.8) node[above]{\small$p_1$};
\draw[fill=black] (8,0.8) circle[radius=2pt];

\draw (6.25,1.4) node[above]{\small$p_2$};
\draw[fill=black] (6,1.4) circle[radius=2pt];

\end{tikzpicture}
\end{equation}
Formally this is defined as follows. The vector space $V[n] \times V$ is covered by closed convex cones
\[ \mathcal{C}_i \colonequals \left\{ \min_{j}(a_j + b_j) = a_i + b_i \right\} \subseteq V[n] \times V \]
for $i \in \{0,\ldots,n\}$. Geometrically $\mathcal{C}_i$ is the locus where $p_i$ lies on the supporting antidiagonal; over $\mathcal{C}_i$ the supporting antidiagonal thus has equation $x+y=a_i+b_i$.

The restricted fan $\Lambda_{S}|_{\mathcal{C}_i}$ is defined by intersecting each cone of $\Lambda_S$ with $\mathcal{C}_i$. We construct $\Lambda_B|_{\mathcal{C}_i}$ by slicing $\Lambda_S|_{\mathcal{C}_i}$ with the hyperplane
\[ D_i \colonequals \{ x+y = a_i+b_i \} \subseteq V[n] \times V\]
so that $\Lambda_B|_{\mathcal{C}_i}$ is the restriction to $\mathcal{C}_i$ of the fan induced by the arrangement of hyperplanes \eqref{eqn: hyperplanes for LambdaS} together with $D_i$ (as in Section~\ref{sec: fan from hyperplane arrangement}). Since
\[ D_i \cap (\mathcal{C}_i \cap \mathcal{C}_j) = D_j \cap (\mathcal{C}_i \cap \mathcal{C}_j) \]
the fans $\Lambda_B|_{\mathcal{C}_i}$ glue to produce a scaffold $\Lambda_B$. Unlike $\Lambda_0$ or $\Lambda_S$, $\Lambda_B$ is not globally induced by a hyperplane arrangement.

The fibres $\Lambda_{B,p}$ are depicted in \eqref{eqn: bipermutahedral scaffold}. This polyhedral complex undergoes phase transitions precisely when the bisequence associated to $p \in V[n]$ changes. This explains the following.

\begin{theorem} \label{thm: get bipermutahedron} The configuration fan $\Pi_{\Lambda_B}$ is equal to the bipermutahedral fan $\Sigma_{n,n}$.	
\end{theorem}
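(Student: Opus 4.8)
The plan is to mimic the structure of the proof of Theorem~\ref{thm: get permutahedron}, establishing the equality of fans in two stages: first that $\Pi_{\Lambda_B}$ refines $\Sigma_{n,n}$, then that the refinement is trivial by exhibiting an explicit weakly semistable model. For the first inclusion, recall that the cones of $\Sigma_{n,n}$ are indexed by bisequences on $\{0,1,\ldots,n\}$. Given such a bisequence, I would write down the corresponding cone $\upsigma$ of $\Sigma_{n,n}$ as an intersection of halfspaces and equalities among the $a_i$, $b_i$, and $a_i+b_i$, and then, exactly as in the $d=1$ case, produce finitely many cones $\uplambda \in \Lambda_B$ whose images $\uppi(\uplambda)$ cut out precisely those halfspaces — one family coming from the horizontal/vertical hyperplanes (as in $\Lambda_S$) and one extra family coming from the antidiagonal slices $D_i$, which on the relevant cone $\mathcal{C}_i$ project to comparisons involving $a_i+b_i$. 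Intersecting these images recovers $\upsigma$, which shows every cone of $\Sigma_{n,n}$ is an intersection of images of cones of $\Lambda_B$, hence that $\Pi_{\Lambda_B} \to \Sigma_{n,n}$ is a refinement. The one subtlety here, relative to the permutahedral case, is the piecewise nature of $\Lambda_B$: the slicing hyperplane depends on which $\mathcal{C}_i$ one is in, so I must check that the cones I produce genuinely lie in $\Lambda_B$ (i.e. are adapted to the correct chamber $\mathcal{C}_i$), which is where the gluing identity $D_i \cap (\mathcal{C}_i \cap \mathcal{C}_j) = D_j \cap (\mathcal{C}_i \cap \mathcal{C}_j)$ quoted just before the theorem gets used.

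For the reverse direction, I would invoke the universal property of $\Pi_{\Lambda_B}$ from Theorem~\ref{thm: semistable reduction}: it suffices to construct a refinement $\Lambda_B' \to \Lambda_B$ such that the projection $\uppi$ is a weakly semistable map of stacky fans $\Lambda_B' \to \Sigma_{n,n}$; then $\Pi_{\Lambda_B}$, being terminal, receives a map from $\Sigma_{n,n}$, and combined with the refinement above this forces equality. The natural candidate for $\Lambda_B'$ is the minimal common refinement of $\Lambda_B$ with the pullback $\uppi^{-1}\Sigma_{n,n}$. On each chamber $\mathcal{C}_i$, the hyperplanes defining $\Lambda_B$ impose comparisons between $x$, the $a_j$, between $y$, the $b_j$, and between $x+y$ and $a_i+b_i$; the pulled-back hyperplanes defining $\Sigma_{n,n}$ impose the full set of bisequence comparisons among the $(a_j,b_j)$. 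The union of these arrangements, restricted to $\mathcal{C}_i$, should be exactly the bipermutahedral arrangement on the $(n+1)$ points $p_0,\ldots,p_n$ together with the extra point $(x,y)$ — in other words $\Lambda_B'$ restricted to $\mathcal{C}_i$ is (a chamber of) $\Sigma_{n+1,n+1}$ with the coordinate $(x,y)$ playing the role of the $(n{+}1)$-st point, pinned to the antidiagonal through $p_i$. One then checks that forgetting this last point gives a weakly semistable map $\Sigma_{n+1,n+1} \to \Sigma_{n,n}$; this is a known property of the forgetful map between bipermutahedral fans (it corresponds geometrically to the universal family, which is equidimensional with reduced fibres), analogous to the fact used at the end of the proof of Theorem~\ref{thm: get permutahedron} that $\Sigma_{n+1} \to \Sigma_n$ is weakly semistable. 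Since all lattices in sight are the standard ones, the stacky condition $\uppi(L_{\uplambda}) = L_{\uppi(\uplambda)}$ is automatic, so no root constructions appear and $P_{\Lambda_B}$ is an honest variety.

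The main obstacle I anticipate is the bookkeeping around the piecewise/chamber structure: because $\Lambda_B$ is \emph{not} globally a hyperplane arrangement, one cannot simply quote ``the fan of a hyperplane arrangement'' as in Lemma~\ref{lem: unique minimal scaffold}, and must instead argue chamber-by-chamber and then verify that the local weakly-semistable models over the $\mathcal{C}_i$ glue compatibly. Concretely, the assertion ``$\Lambda_B'|_{\mathcal{C}_i} = \Sigma_{n+1,n+1}$-chamber'' needs the combinatorial input that the bisequence of the $(n{+}1)$-point configuration $(p_0,\ldots,p_n,(x,y))$, restricted to the locus where the last point lies on the antidiagonal through $p_i$, is determined by exactly the data recorded by the arrangement \eqref{eqn: hyperplanes for LambdaS} together with $D_i$ and the pullback of the bisequence of $(p_0,\ldots,p_n)$. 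This is essentially a translation of Ardila--Denham--Huh's definition of bisequence, and I would isolate it as a combinatorial lemma before assembling the proof. Once that lemma is in hand, the gluing and the weak semistability of the forgetful map are routine, and the universal property closes the argument.
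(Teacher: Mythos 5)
Your overall architecture is the paper's: first show $\Pi_{\Lambda_B}$ refines $\Sigma_{n,n}$ by realising each cone of $\Sigma_{n,n}$ as an intersection of images of cones of $\Lambda_B$ (working chamber-by-chamber over the $\mathcal{C}_i$), then close the gap via the universal property of Theorem~\ref{thm: semistable reduction} by exhibiting a refinement $\Lambda_B^\prime$ of $\Lambda_B$ that maps weakly semistably to $\Sigma_{n,n}$. The first half is essentially correct and matches the paper, modulo the bookkeeping you correctly flag (the paper handles the antidiagonal comparisons by choosing cones with $x=a_j$, $y=b_k$ so that the single comparison of $a_i+b_i$ against $x+y$ available in $\Lambda_B|_{\mathcal{C}_i}$ projects to the desired comparison against $a_j+b_k$).

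The second half, however, has a genuine gap: the combinatorial lemma you propose to isolate — that $\Lambda_B^\prime|_{\mathcal{C}_i}$ is a chamber of $\Sigma_{n+1,n+1}$ with $(x,y)$ as the extra point — appears to be false, and the "known property" that the forgetful map $\Sigma_{n+1,n+1}\to\Sigma_{n,n}$ is weakly semistable is neither known nor, as far as I can see, true. Concretely: the minimal common refinement $\Lambda_B^\prime|_{\mathcal{C}_i}$ is cut out by pairwise comparisons among $x,a_0,\ldots,a_n$, pairwise comparisons among $y,b_0,\ldots,b_n$, comparisons of $a_i+b_i$ against the $a_j+b_k$, and the single comparison of $a_i+b_i$ against $x+y$. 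The bipermutahedral arrangement on the $n+2$ points would additionally impose comparisons of the minimal antidiagonal sum against the mixed sums $a_j+y$ and $x+b_k$, and — worse — its chamber structure is governed by the minimiser over all $n+2$ points, so on the locus $x+y<\min_j(a_j+b_j)$ the reference point for the bisequence becomes $(x,y)$ itself and the defining hyperplanes change entirely; this locus is not even subordinate to the decomposition into the $\mathcal{C}_i$. Relatedly, a cone of $\Sigma_{n+1,n+1}$ on which the minimiser is $(x,y)$ records comparisons $x+y\lessgtr a_j+b_k$; projecting away $(x,y)$ yields constraints of the form $a_{j_1}+b_{k_1}\leq a_j+b_k$ in which neither side is the minimal antidiagonal sum, and such half-spaces are not unions of cones of $\Sigma_{n,n}$. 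So the image of a cone need not be a cone, and the forgetful map is not even a map of fans. The paper avoids all of this: it describes $\Lambda_B^\prime|_{\mathcal{C}_i}$ directly by its four families of comparisons, observes that these determine a cone $\upsigma\in\Sigma_{n,n}|_{C_i}$ with $\uppi(\uplambda^\prime)\subseteq\upsigma$, and then proves $\uppi(\uplambda^\prime)=\upsigma$ by a direct lifting argument — for any $p\in\upsigma$ one can choose $(x,y)$ satisfying the comparisons involving $x$ and $y$, since those impose no further constraints on the $a_j,b_j$ beyond transitivity consequences already encoded in $\upsigma$. You would need to replace your appeal to $\Sigma_{n+1,n+1}$ with an argument of this kind.
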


\begin{proof} We first describe $\Sigma_{n,n}$ in the form we require. The vector space $V[n]$ is covered by closed convex cones
\[ C_i \colonequals \left\{ \min_j(a_j+b_j) = a_i + b_i \right\} \subseteq V[n] \]
for $i \in \{0,\ldots,n\}$. Fixing $i$, the fan $\Sigma_{n,n}|_{C_i}$ is constructed as in Section~\ref{sec: fan from hyperplane arrangement} by imposing comparisons for the following pairs of linear functions:
\begin{align*} & (a_j,a_k) \qquad \qquad \quad 0 \leq j < k \leq n,\\
& (b_j,b_k) \qquad \qquad \quad\, 0 \leq j < k \leq n, \\
& (a_j+b_k,a_i+b_i) \qquad 0 \leq j,k \leq n.
\end{align*}
Equivalently, each cone $\upsigma \in \Sigma_{n,n}|_{C_i}$ is obtained by choosing the following comparisons:\footnote{The comparisons of type (i), (ii), (iii) contain redundancies. Since $\upsigma \subseteq C_i$ we must have $a_i+b_i \leq a_j+b_j$ for all $j$. Consequently, choosing $a_i+b_i = a_j+b_j$ or $a_i+b_i \geq a_j+b_j$ in (iii) produces the same cone. Similarly if we choose $a_i \leq a_j$ in (i) and $b_i \leq b_k$ in (ii) then we automatically have $a_i+b_i \leq a_j+b_k$, and so choosing $a_i+b_i = a_j+b_k$ or $a_i+b_i \geq a_j+b_k$ in (iii) produces the same cone. These redundancies do not affect the argument.}
\begin{enumerate}[label=(\roman*)]
\item A total preorder of $a_0,\ldots,a_n$.
\item A total preorder of $b_0,\ldots,b_n$.
\item A comparison of $a_j+b_k$ against $a_i+b_i$ for $0 \leq j,k \leq n$.
\end{enumerate}
Together these comparisons determine the bisequence associated to a point $p=(a_1,b_1,\ldots,a_n,b_n) \in C_i$. The cone $\upsigma \subseteq C_i$ is cut out by the corresponding (in)equalities. The fans $\Sigma_{n,n}|_{C_i}$ glue to produce $\Sigma_{n,n}$. The comparisons of type (i) and (ii) show that $\Sigma_{n,n}$ is a refinement of $\Sigma_n \times \Sigma_n$.

To show that $\Pi_{\Lambda_B}=\Sigma_{n,n}$ we follow the proof of Theorem~\ref{thm: get permutahedron}. We first prove that $\Pi_{\Lambda_B}$ is a refinement of $\Sigma_{n,n}$, by showing that each cone of $\Sigma_{n,n}$ is an intersection of images of cones of $\Lambda_B$.

Fix a cone $\upsigma \in \Sigma_{n,n}$. We have $\upsigma \in \Sigma_{n,n}|_{C_i}$ for some $i$ (not necessarily unique) and $\upsigma$ is cut out by the (in)equalities imposing chosen comparisons of type (i), (ii), (iii). For each of these (in)equalities we find a cone $\uplambda \in \Lambda_B$ such that the given (in)equality holds on $\uppi(\uplambda)$, and such that $\upsigma \subseteq \uppi(\uplambda)$. The intersection of all such $\uppi(\uplambda)$ is then equal to $\upsigma$.

Since $\uppi^{-1}(C_i) = \mathcal{C}_i$ we must have $\uplambda \in \Lambda_B|_{\mathcal{C}_i}$. A cone $\uplambda \in \Lambda_B|_{\mathcal{C}_i}$ is obtained by choosing the following comparisons:
\begin{enumerate}[label=(\Roman*)]
	\item A comparison of $x$ against each of $a_0,\ldots,a_n$.
	\item A comparison of $y$ against each of $b_0,\ldots,b_n$.
	\item A comparison of $x+y$ against $a_i+b_i$.
\end{enumerate}
We work through the comparisons defining $\upsigma$. First consider a comparison of type (i), comparing a pair $(a_j,a_k)$. Choose the type (I) comparison $x=a_j$. For the other $a_l$ (including $l=k$) choose the type (I) comparison for the pair $(x,a_l)$ to be equal to the type (i) comparison for the pair $(a_j,a_l)$. This specifies the type (I) comparisons. For the type (II) comparisons, choose $y=b_0$ and for $1 \leq l \leq n$ choose the comparison for $(y,b_l)$ to be the same as the type (i) comparison for $(b_0,b_l)$. This specifies the type (II) comparisons. Finally for the type (III) comparison, choose the comparison for $(x+y,a_i+b_i)$ to be the same as the type (iii) comparison for $(a_j+b_0,a_i+b_i)$. This produces comparisons of types (I), (II), (III) and hence a cone $\uplambda \in \Lambda_B|_{\Ccal_i}$. By construction the type (i) comparison for the pair $(a_j,a_k)$ holds on $\uppi(\lambda)$, and we have $\upsigma \subseteq \uppi(\uplambda)$ as required.

The case of a type (ii) comparison is identical. It remains to consider a type (iii) comparison. This compares $a_j+b_k$ against $a_i+b_i$. We choose the type (I) and (II) comparisons
\[ x = a_j, \qquad y=b_k, \]
and choose the type (III) comparison for $(x+y,a_i+b_i)$ to be the same as the type (iii) comparison for $(a_j+b_k,a_i+b_i)$. We choose the remaining type (I) and (II) comparisons compatibly with the type (i) and (ii) comparisons, as in the previous case. This produces a cone $\uplambda \in \Lambda_B|_{\mathcal{C}_i}$ such that the type (iii) comparison for the pair $(a_j+b_k,a_i+b_i)$ holds on $\uppi(\uplambda)$, and $\upsigma \subseteq \uppi(\uplambda)$ as required.

We conclude that there is a refinement
\[ \Pi_{\Lambda_B} \to \Sigma_{n,n}.\]
To prove that this is an equality, we invoke the universal property of $\Pi_{\Lambda_B}$ (Theorem~\ref{thm: semistable reduction}). It suffices to construct a refinement $\Lambda_B^\prime \to \Lambda_B$ such that $\uppi$ is a weakly semistable map of fans $\Lambda_B^\prime \to \Sigma_{n,n}$.

We construct $\Lambda_B^\prime$ as the minimal common refinement of $\Lambda_B$ and the preimage of $\Sigma_{n,n}$ under $\uppi$. The condition of being a weakly semistable map of fans is local on the source, so it suffices to show that $\uppi$ is a weakly semistable map of fans
\begin{equation} \label{eqn: map lambda prime to sigma} \Lambda_B^\prime|_{\mathcal{C}_i} \to \Sigma_{n,n}|_{C_i}.\end{equation}
Both $\Lambda_B|_{\mathcal{C}_i}$ and $\Sigma_{n,n}|_{C_i}$ are induced by hyperplane arrangements. We deduce that a cone $\uplambda^\prime \in \Lambda_B^\prime|_{\mathcal{C}_i}$ is obtained by choosing the following comparisons:
\begin{enumerate}[label=(\Roman*')]
\item A total preorder of $x,a_0,\ldots,a_n$.
\item A total preorder of $y,b_0,\ldots,b_n$.
\item A comparison of $a_j+b_k$ against $a_i+b_i$ for $0 \leq j,k \leq n$.
\item A comparison of $x+y$ against $a_i+b_i$.
\end{enumerate}
Fix a cone $\uplambda^\prime \in \Lambda^\prime_B|_{\Ccal_i}$. The comparisons of type (I') and (II') immediately determine comparisons of type (i) and (ii). We produce comparisons of type (iii) by combining the comparisons of type (III') with an additional set of comparisons, obtained as follows. If $x \geq a_j$ in (I'), $y \geq b_k$ in (II'), and $x+y \geq a_i+b_i$ in (IV'), then we append the type (iii) comparison $a_j+b_k \geq a_i+b_i$ (and similarly with the inequalities reversed).

We have thus produced a cone $\upsigma \in \Sigma_{n,n}|_{C_i}$ with $\uppi(\uplambda^\prime) \subseteq \upsigma$. This shows that \eqref{eqn: map lambda prime to sigma} is a map of fans. It remains to show that it is weakly semistable.

Fix $p=(a_1,b_1,\ldots,a_n,b_n) \in |\upsigma|$. The type (I'), (II'), (IV') comparisons that involve $x$ and $y$ do not impose any restrictions on the $a_j$ and $b_j$ beyond the type (i), (ii), (iii) comparisons constructed above. It follows that $p$ admits a lift, and hence $\uppi(\uplambda^\prime)=\upsigma$ as required.
\end{proof}

\begin{example} Consider the cone $\uprho \in \Sigma_n \times \Sigma_n$ from Example~\ref{example: square of permutahedron}. This is cut out by the inequalities \eqref{eqn: inequalities defining square cone example}. These correspond to the type (i) and (ii) comparisons appearing in the proof of Theorem~\ref{thm: get bipermutahedron}. 

The refinement $\Sigma_{n,n} \to \Sigma_n \times \Sigma_n$ subdivides $\uprho$ into a union of cones. By Theorem~\ref{thm: get bipermutahedron} each such cone corresponds to a different polyhedral complex $\Lambda_{B,p}$. An example is illustrated in Figure~\ref{fig: biperm scaffold}. 
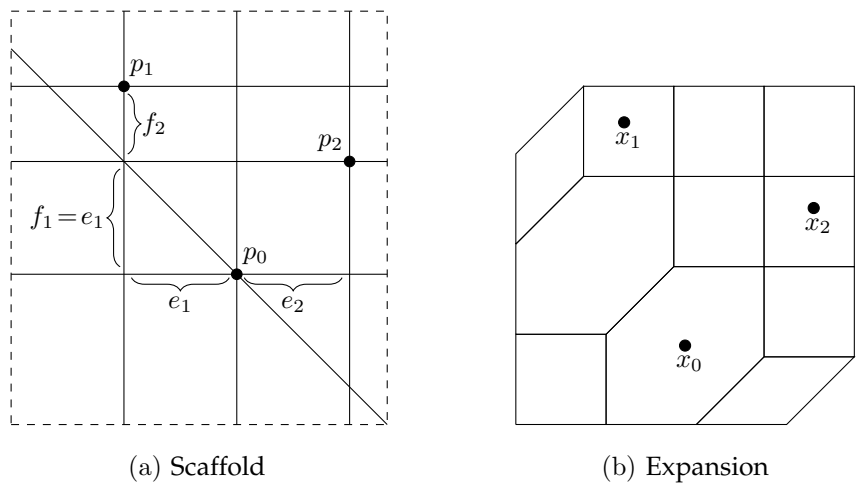
\begin{figure}[h]
\centering
\begin{subfigure}[b]{0.3\textwidth}
\[
\begin{tikzpicture}[scale=1]

\draw [dashed] (0,-1) -- (5,-1) -- (5,4.5) -- (0,4.5) -- (0,-1);

\draw (0,1) -- (5,1);
\draw (1.5,-1) -- (1.5,4.5);
\draw (0,3.5) -- (5,3.5);
\draw (4.5,-1) -- (4.5,4.5);
\draw (0,2.5) -- (5,2.5);
\draw (3,-1) -- (3,4.5);

\draw [decorate,decoration={brace,amplitude=5pt,mirror,raise=0.25ex}] (1.6,1) -- (2.9,1) node[midway,yshift=-11.5pt]{\small$e_1$};
\draw [decorate,decoration={brace,amplitude=5pt,mirror,raise=0.25ex}] (3.1,1) -- (4.4,1) node[midway,yshift=-11.5pt]{\small$e_2$};
\draw [decorate,decoration={brace,amplitude=5pt}] (1.45,1.1) -- (1.45,2.4) node[midway,xshift=-20pt]{\small$f_1\!=\!e_1$};
\draw [decorate,decoration={brace,amplitude=5pt,mirror}] (1.55,2.6) -- (1.55,3.4) node[midway,xshift=10pt]{\small$f_2$};

\draw (3.25,1) node[above]{\small$p_0$};
\draw[fill=black] (3,1) circle[radius=2pt];

\draw (1.75,3.5) node[above]{\small$p_1$};
\draw[fill=black] (1.5,3.5) circle[radius=2pt];

\draw (4.25,2.5) node[above]{\small$p_2$};
\draw[fill=black] (4.5,2.5) circle[radius=2pt];

\draw (5,-1) -- (0,4);

\end{tikzpicture}
\]
\caption{Scaffold}
\label{fig: biperm scaffold}
\end{subfigure}\qquad\qquad
\begin{subfigure}[b]{0.3\textwidth}
\[
\begin{tikzpicture}[scale=1.2]

\draw (0,0) -- (1,0) -- (1,1) -- (0,1) -- (0,0);

\draw (0,1) -- (1,1) -- (1.75,1.75) -- (1.75,2.75) -- (0.75,2.75) -- (0,2) -- (0,1);

\draw (1,0) -- (2,0) -- (2.75,0.75) -- (2.75,1.75) -- (1.75,1.75) -- (1,1) -- (1,0);

\draw (2,0) -- (3,0) -- (3.75,0.75) -- (2.75,0.75) -- (2,0);

\draw (2.75,0.75) -- (3.75,0.75) -- (3.75,1.75) -- (2.75,1.75) -- (2.75,0.75);

\draw (2.75,1.75) -- (3.75,1.75) -- (3.75,2.75) -- (2.75,2.75) -- (2.75,1.75);

\draw (1.75,1.75) -- (2.75,1.75) -- (2.75,2.75) -- (1.75,2.75) -- (1.75,1.75);

\draw (2.75,2.75) -- (3.75,2.75) -- (3.75,3.75) -- (2.75,3.75) -- (2.75,2.75);

\draw (1.75,2.75) -- (2.75,2.75) -- (2.75,3.75) -- (1.75,3.75) -- (1.75,2.75);

\draw (0.75,2.75) -- (1.75,2.75) -- (1.75,3.75) -- (0.75,3.75) -- (0.75,2.75);

\draw (0,2) -- (0,3) -- (0.75,3.75) -- (0.75,2.75) -- (0,2);

\draw[fill=black] (1.875,0.875) circle[radius=1.75pt];
\draw (1.925,0.875) node[below]{\small$x_0$};

\draw[fill=black] (1.2,3.35) circle[radius=1.75pt];
\draw (1.25,3.35) node[below]{\small$x_1$};

\draw[fill=black] (3.3,2.4) circle[radius=1.75pt];
\draw (3.35,2.4) node[below]{\small$x_2$};

\end{tikzpicture}
\]
\caption{Expansion}
\label{fig: biperm expansion}
\end{subfigure}
\caption{A stratum of $P_{\Lambda_B}$.}
\end{figure}
\noindent The associated bisequence is $2|0|12|1$ and the corresponding cone $\uptau \in \Sigma_{n,n}$ is the subset of $\uprho$ defined by the additional (in)equalities
\begin{align*}
a_0 + b_0 & \leq a_1 + b_1, \\
a_0 + b_0 & = a_1 + b_2, \\
a_0 + b_0 & \leq a_2 + b_1, \\
a_0 + b_0 & \leq a_2+b_2.
\end{align*}
The last two inequalities are redundant: $a_0 \leq a_2$ and $b_0 \leq b_2 \leq b_1$ on $\uprho$ already imply $a_0 + b_0 \leq a_2 + b_1$ and $a_0 + b_0 \leq a_2 + b_2$. The first two (in)equalities are essential.

The polyhedral edge lengths again give a coordinate system on $\uptau$. Notice that now we have $e_1=f_1$ because the antidiagonal is required to pass through the vertex $(a_0-e_1,b_0+f_1)$. This arises from the equality
\[ a_0+b_0 = a_1 + b_2 \Leftrightarrow a_0-a_1 = b_2-b_0 \Leftrightarrow e_1=f_1.\]
A coordinate system for $\uptau$ is therefore given by $(e_1,e_2,f_2)$. Correspondingly we have $\dim \uptau = 3$ whereas $\dim \uprho = 4$.

The associated tropical expansion is illustrated in Figure~\ref{fig: biperm expansion}. The $2$ hexagonal components are two-dimensional permutahedral varieties and each of the remaining $9$ components is a $\PP^1 \times \PP^1$. As far as the moduli of the $x_i$ is concerned, this expansion is for all intents and purposes identical to the expansion considered in Example~\ref{example: square of permutahedron}. The rubber torus, however, differs in a crucial way: it is now only three-dimensional
\[ T_\uptau = (\Gm^3)_{e_1 e_2 f_2}. \]
This is precisely the subtorus $T_\uptau \subseteq T_\uprho$ that acts trivially on the divisor joining the two hexagonal components; see \cite[Example~4.4]{CarocciNabijouRubber} for a similar phenomenon. As usual the rubber action is governed by the tropical position maps
\begin{align*}
\varphi_{v_0}(e_1,e_2,f_2) & = (a_0,b_0) = (0,0), \\
\varphi_{v_1}(e_1,e_2,f_2) & = (a_1,b_1) = (-e_1,e_1+f_2), \\
\varphi_{v_2}(e_1,e_2,f_2) & = (a_2,b_2) = (e_2,e_1).	
\end{align*}
From Theorem~\ref{thm: strata} we see that there are $(2+2)-3=1$ dimensions of moduli for $P_{\Lambda_B,\uptau} \subseteq P_{\Lambda_B}$. This is consistent with the fact that $\operatorname{codim}\uptau=1$.
\end{example}

\subsection{Harmonic fan} \label{sec: harmonic} A close cousin of the bipermutahedral fan is the harmonic fan $H_{n,n}$, introduced in \cite[Section~2.8]{ArdilaDenhamHuh} and studied in \cite{ArdilaEscobar}. It is non-simplicial and sits in a tower of refinements
\[ \Sigma_{n,n} \to H_{n,n} \to \Sigma_n \times \Sigma_n.\]
A cone of $H_{n,n}$ is produced by choosing pairwise comparisons of $a_0,\ldots,a_n$ and of $b_0,\ldots,b_n$, along with a subset of $\{0,\ldots,n\}$ indexing those points $p_i$ which lie on the supporting antidiagonal.

\begin{question} Does $H_{n,n}$ arise as the configuration fan $\Pi_\Lambda$ associated to any tropical scaffold $\Lambda$? \end{question}

We are unable to identify a suitable scaffold. If the answer is indeed negative, this provides another example of the bipermutahedral fan enjoying properties which the harmonic fan lacks.



\footnotesize
\bibliographystyle{alpha}
\bibliography{Bibliography.bib}

\smallskip

\noindent Navid Nabijou. Queen Mary University of London. \href{mailto:n.nabijou@qmul.ac.uk}{n.nabijou@qmul.ac.uk}

\end{document}